\newcommand{\re}{\mathbb{R}}
\newcommand{\cpx}{\mathbb{C}}
\newcommand{\Z}{\mathbb{Z}}
\newcommand{\N}{\mathbb{N}}
\newcommand{\diag}{\mbox{diag}}
\newcommand{\half}{\frac{1}{2}}
\newcommand{\lmd}{\lambda}
\newcommand{\eps}{\epsilon}
\def\rank{\mbox{rank}}
\newcommand{\sig}{\sigma}
\newcommand{\Sig}{\Sigma}
\newcommand{\reff}[1]{(\ref{#1})}
\newcommand{\pt}{\partial}
\newcommand{\prm}{\prime}
\newcommand{\mc}[1]{\mathcal{#1}}
\newcommand{\bdes}{\begin{description}}
\newcommand{\edes}{\end{description}}
\newcommand{\bal}{\begin{align}}
\newcommand{\eal}{\end{align}}
\newcommand{\bnum}{\begin{enumerate}}
\newcommand{\enum}{\end{enumerate}}
\newcommand{\bit}{\begin{itemize}}
\newcommand{\eit}{\end{itemize}}
\newcommand{\bea}{\begin{eqnarray}}
\newcommand{\eea}{\end{eqnarray}}
\newcommand{\be}{\begin{equation}}
\newcommand{\ee}{\end{equation}}
\newcommand{\baray}{\begin{array}}
\newcommand{\earay}{\end{array}}
\newcommand{\bsry}{\begin{subarray}}
\newcommand{\esry}{\end{subarray}}
\newcommand{\bca}{\begin{cases}}
\newcommand{\eca}{\end{cases}}
\newcommand{\bcen}{\begin{center}}
\newcommand{\ecen}{\end{center}}
\newcommand{\bbm}{\begin{bmatrix}}
\newcommand{\ebm}{\end{bmatrix}}
\newcommand{\bmx}{\begin{matrix}}
\newcommand{\emx}{\end{matrix}}
\newcommand{\bpm}{\begin{pmatrix}}
\newcommand{\epm}{\end{pmatrix}}
\newcommand{\btab}{\begin{tabular}}
\newcommand{\etab}{\end{tabular}}
\newtheorem{theorem}{Theorem}[section]
\newtheorem{prop}[theorem]{Proposition}
\newtheorem{lemma}[theorem]{Lemma}
\newtheorem{cond}[theorem]{Condition}
\theoremstyle{definition}
\newtheorem{exm}[theorem]{Example}
\begin{document}

\title[Optimality Conditions and Convergence of Lasserre's Hierarchy]
{Optimality Conditions and Finite Convergence of Lasserre's Hierarchy}

\author[Jiawang Nie]{Jiawang Nie}
\address{Department of Mathematics\\
  University of California \\
  San Diego}
\email{njw@math.ucsd.edu}

\begin{abstract}
Lasserre's hierarchy is a sequence of semidefinite relaxations
for solving polynomial optimization problems globally.
This paper studies the relationship between optimality conditions
in nonlinear programming theory
and finite convergence of Lasserre's hierarchy.
Our main results are:
i) Lasserre's hierarchy has finite convergence
when the constraint qualification, strict complementarity
and second order sufficiency conditions hold
at every global minimizer, under the standard archimedean condition;
the proof uses a result of Marshall on boundary hessian conditions.
ii) These optimality conditions are all satisfied at every local minimizer
if a finite set of polynomials, which are in the coefficients of input polynomials,
do not vanish at the input data (i.e., they hold in a Zariski open set).
This implies that, under archimedeanness,
Lasserre's hierarchy has finite convergence generically.
\end{abstract}

\keywords{Lasserre's hierarchy, optimality conditions,
polynomial optimization, semidefinite program, sum of squares}

\subjclass{65K05, 90C22, 90C26}

\maketitle

\section{Introduction}

Given polynomials $f, h_i, g_j$ in $x\in\re^n$,
consider the optimization problem
\be  \label{pop:gen}
\left\{\baray{rl}
\min & f(x) \\
s.t. & h_i(x) = 0 \,(i=1,\ldots,m_1),  \\
 & g_j(x) \geq  0 \,(j=1,\ldots,m_2).
\earay \right.
\ee
Let $K$ be the feasible set of \reff{pop:gen}.
When $m_1=0$ (resp. $m_2=0$), there are no equality (resp. inequality) constraints.
For convenience, denote $h:=(h_1,\ldots,h_{m_1})$,
$g:=(g_1,\ldots,g_{m_2})$ and $g_0:=1$.
A standard approach for solving \reff{pop:gen} globally is
{\it Lasserre's hierarchy} of semidefinite programming (SDP) relaxations \cite{Las01}.
It is based on a sequence of SOS type representations
for polynomials that are nonnegative on $K$.
To describe Lasserre's hierarchy, we first introduce some notation.
Let $\re[x]$ be the ring of polynomials with real coefficients
and in $x:=(x_1,\ldots,x_n)$.
A polynomial $p \in \re[x]$ is said to be SOS
if $p=p_1^2+\cdots+p_k^2$ for $p_1,\ldots, p_k \in \re[x]$.
The set of all SOS polynomials is denoted by $\Sig \re[x]^2$.
For each $k \in \N$ ($\N$ is the set of nonnegative integers), denote
\[
\langle h \rangle_{2k}: =
\left\{
\left. \overset{m_1}{ \underset{ i=1}{\sum} }  \phi_i h_i \right|
\baray{c}
\mbox{ each } \phi_i \in \re[x] \\
\mbox{ and } \deg (\phi_i h_i ) \leq 2k
\earay
\right\},
\]
\[
Q_k(g):= \left\{
\left. \overset{m_2}{ \underset{ j=0}{\sum} }   \sig_j g_j \right|
\baray{c}
\mbox{each } \sig_j \in  \Sig\re[x]^2  \\
\mbox{ and } \deg( \sig_j g_j) \leq 2k
\earay
\right\}.
\]
The set $\langle h \rangle_{2k}$ is called
the $2k$-th {\it truncated ideal} generated by $h$,
and $Q_k(g)$ is called the $k$-th {\it truncated quadratic module}
generated by $g$. Lasserre's hierarchy is the sequence
of SOS relaxations ($k\in\N$ is called a relaxation order):
%
%
\be  \label{sos:Put}
\max \quad \gamma \quad \mbox{s.t.} \quad
f - \gamma  \in  \langle h \rangle_{2k} + Q_k(g).
\ee
%
%
The SOS program \reff{sos:Put} is equivalent to a semidefinite program \cite{Las01}.
We refer to \cite{LasBok,Lau} for surveys in this area.

Let $f_{min}$ denote the minimum value of \reff{pop:gen} and
$f_k$ denote the optimal value of \reff{sos:Put}. Clearly, $f_k\leq f_{min}$ for all $k$
and $\{f_k\}$ is monotonically increasing. Under the archimedean condition
(i.e., $R- \Sig_{i=1}^n x_i^2 \in \langle h \rangle_{2t}+ Q_t(g)$
for some $t \in \N$ and $R>0$),
Lasserre obtained the asymptotic convergence $f_k \to f_{min}$ as $k\to \infty$,
by using Putinar's Positivstellensatz (cf.~Theorem~\ref{Put-Pos}).
When $f_k = f_{min}$ for some $k$,
we say Lasserre's hierarchy has {\it finite convergence}.
When $h(x)=0$ defines a finite set in the complex space $\cpx^n$,
Laurent \cite{Lau07} proved that Lasserre's hierarchy has finite convergence.
%
%
Indeed, when $h(x)=0$ defines a finite set in $\re^n$,
the sequence $\{f_k\}$ also has finite convergence to $f_{min}$,
as shown in \cite{Nie-PopVar}.
There exist examples that Lasserre's hierarchy fails to have finite convergence,
e.g., when $f$ is the Motzkin polynomial
$x_1^2x_2^2(x_1^2+x_2^2-3x_3^2)+x_3^6$
and $K$ is the unit ball \cite[Example~5.3]{Nie-jac}.
Indeed, such examples always exist
when $\dim(K)\geq 3$ (cf.~Scheiderer \cite[Prop.~6.1]{Sch99}).

However, in practical applications,
Lasserre's hierarchy often has finite convergence,
e.g., as shown by numerical experiments in
Henrion and Lasserre~\cite{HenLas03,HenLas05}.
The known examples for which finite convergence fails
are created in very special ways.
%
%
Since Lasserre proposed his method in \cite{Las01},
people are intrigued very much by the discrepancy between its theory
(only asymptotic convergence is guaranteed theoretically) and its practical performance
(in applications we often observe finite convergence).
The motivation of this paper is trying to resolve this discrepancy.
Our main result is that Lasserre's hierarchy has finite convergence
when a finite set of polynomials, which are in the coefficients
of $f$ and all $h_i,g_j$,
do not vanish at the input data, under the archimedean condition.
This implies that, under archimedeanness,
Lasserre's hierarchy has finite convergence generically.
(We say a property holds {\it generically}
if it holds in the entire space of input data
except a set of Lebsgue measure zero.)
To prove this, we need to investigate optimality conditions for \reff{pop:gen}.

We here give a short review of optimality conditions
in nonlinear programming theory (cf. \cite[Section~3.3]{Brks}).
Let $u$ be a local minimizer of \reff{pop:gen} and
$J(u)=\{j_1, \ldots, j_r\}$ be the index set of active inequality constraints.
If the {\it constraint qualification condition (CQC)} holds at $u$, i.e., the gradients
\[
\nabla h_1(u), \ldots, \nabla h_{m_1}(u),
\nabla g_{m_1}(u), \ldots, \nabla g_{j_r}(u)
\]
are linearly independent, then
there exist Lagrange multipliers $\lmd_1,\ldots,\lmd_{m_1}$
and $\mu_{1},\ldots,\mu_{m_2} $ satisfying
\be  \label{opcd:grad}
\nabla f(u) =  \sum_{i=1}^{m_1} \lmd_i \nabla h_i(u) +
\sum_{j =1 }^{m_2} \mu_j \nabla g_j (u),
\ee
\be \label{op-cd:comc}
\mu_{1} g_{1}(u) = \cdots = \mu_{m_2} g_{m_2}(u) = 0, \quad
\mu_{1} \geq 0, \ldots,  \mu_{m_2} \geq 0.
\ee
The equation \reff{opcd:grad} is called the {\it first order optimality condition (FOOC)},
and \reff{op-cd:comc} is called the {\it complementarity condition}.
If it further holds that
\be \label{optcd-scc}
\mu_{1} + g_{1}(u) >0, \ldots,  \mu_{m_2} + g_{m_2}(u) > 0,
\ee
we say the {\it strict complementarity condition (SCC)} holds at $u$.
Note that strict complementarity is equivalent to $\mu_j >0$ for every $j \in J(u)$.
Let $L(x)$ be the associated Lagrange function
\[
L(x) := f(x) - \sum_{i=1}^{m_1} \lmd_i h_i(x) -
\sum_{j\in J(u)} \mu_j  g_j (x).
\]
Clearly, \reff{opcd:grad} implies $\nabla_x L(u) = 0$.
The polynomials $f,h_i,g_j$ are infinitely many times differentiable everywhere.
Thus, under the constraint qualification condition,
the {\it second order necessity condition (SONC)} holds at $u$, i.e.,
\be \label{opt:sonc}
v^T \nabla_x^2 L(u) v \geq 0 \quad  \mbox{ for all } \, v \in G(u)^\perp.
\ee
Here, $G(x)$ denotes the Jacobian of the active constraining polynomials
\[
G(x) = \bbm \nabla h_1(x) & \cdots & \nabla h_{m_1}(x) &
\nabla g_{j_1}(x) & \cdots & \nabla g_{j_r}(x) \ebm^T
\]
and $G(u)^\perp$ denotes the null space of $G(u)$. If it holds that
\be \label{sosc-optcnd}
v^T \nabla_x^2 L(u) v > 0 \quad  \mbox{ for all } \, 0 \ne v \in G(u)^\perp,
\ee
we say the {\it second order sufficiency condition (SOSC)} holds at $u$.

We summarize the above as follows.
If the constraint qualification condition holds at $u$,
then \reff{opcd:grad}, \reff{op-cd:comc} and \reff{opt:sonc} are necessary conditions for
$u$ to be a local minimizer of $f$ on $K$, but they are not sufficient.
If \reff{opcd:grad}, \reff{op-cd:comc}, \reff{optcd-scc} and \reff{sosc-optcnd}
hold at a point $u \in K$, then $u$ is a strict local minimizer of \reff{pop:gen}.
The first order optimality, strict complementarity and
second order sufficiency conditions are sufficient for strict local optimality.
We refer to \cite[Section~3.3]{Brks}.

This paper studies the relationship between
optimality conditions and finite convergence of Lasserre's hierarchy.
Denote $\re[x]_d :=\{ p\in \re[x]: \deg(p) \leq d\}$
and $[m]:=\{1,\ldots,m\}$.
Our main conclusions are the following two theorems.

\begin{theorem} \label{mthm:opc=>fcvg}
Suppose the archimedean condition holds
for the polynomial tuples $h$ and $g$ in \reff{pop:gen}.
If the constraint qualification, strict complementarity and
second order sufficiency conditions
hold at every global minimizer of \reff{pop:gen},
then Lasserre's hierarchy of \reff{sos:Put} has finite convergence.
\end{theorem}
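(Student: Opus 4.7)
The plan is to reduce the theorem to Marshall's boundary hessian condition (BHC), which guarantees membership in $\langle h\rangle + Q(g)$ with a controlled degree, hence finite convergence of \reff{sos:Put}.

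First, I would set $\tilde f := f - f_{\min}$, so that $\tilde f \geq 0$ on $K$ and the zero set of $\tilde f$ on $K$ is exactly the set $U$ of global minimizers. Under archimedeanness, $K$ is compact, so $U$ is nonempty and compact. The hypotheses CQC, SCC and SOSC at each $u\in U$ imply that every global minimizer is a \emph{strict} local minimizer of $f$ on $K$, hence isolated in $K$; combined with compactness, $U$ is finite, say $U=\{u^{(1)},\dots,u^{(N)}\}$.

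The heart of the proof is to show that, at each $u=u^{(\ell)}\in U$, the polynomial $\tilde f$ satisfies Marshall's boundary hessian condition relative to the constraints $h_i,g_j$. By CQC, the gradients of $h_i$ ($i\in[m_1]$) and of $g_{j_k}$ ($k\in[r]$, for $j_k\in J(u)$) are linearly independent at $u$, so by the implicit function theorem one can choose smooth (in fact analytic, even polynomial in appropriate coordinates) local coordinates $t=(t_1,\dots,t_n)$ centered at $u$ with
\[
t_i=h_i(x)\ (1\le i\le m_1),\qquad t_{m_1+k}=g_{j_k}(x)\ (1\le k\le r),
\]
and the remaining $t_{m_1+r+1},\dots,t_n$ chosen so that the differentials $dt_{m_1+r+1},\dots,dt_n$ at $u$ form a basis of $G(u)^\perp$ in $T_u\re^n$. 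In these coordinates, $K$ near $u$ is $\{t_1=\cdots=t_{m_1}=0,\ t_{m_1+1},\dots,t_{m_1+r}\ge 0\}$ (the inactive $g_j$ stay positive locally). Expanding $\tilde f$ in $t$, the first order optimality condition \reff{opcd:grad} forces the linear part to be
\[
\sum_{i=1}^{m_1}\lambda_i\,t_i\;+\;\sum_{k=1}^{r}\mu_{j_k}\,t_{m_1+k},
\]
since the pullback of $\nabla f(u)$ to $T_u\re^n$ lies in $\mathrm{span}\{dh_i(u),dg_{j_k}(u)\}$; in particular, the coefficients of the ``free'' coordinates $t_{m_1+r+1},\dots,t_n$ vanish. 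By SCC, each $\mu_{j_k}>0$. The Lagrangian $L=f-\sum\lambda_i h_i-\sum\mu_{j_k}g_{j_k}$ then has no linear part in $t$, and its pure quadratic part in $(t_{m_1+r+1},\dots,t_n)$ equals the restriction of $v^T\nabla_x^2L(u)v$ to $G(u)^\perp$, which is positive definite by SOSC. Thus
\[
\tilde f = \sum_{i=1}^{m_1}\lambda_i t_i + \sum_{k=1}^{r}\mu_{j_k}t_{m_1+k} + q(t_{m_1+r+1},\dots,t_n) + R(t),
\]
with $\mu_{j_k}>0$, $q$ a positive definite quadratic, and $R$ collecting higher order terms (cubic or higher in the free variables, or carrying a factor of some $t_i$, $i\le m_1+r$). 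This is precisely Marshall's BHC form at $u$.

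Finally, I would invoke Marshall's theorem: under archimedeanness, if $\tilde f\ge 0$ on $K$ vanishes at only finitely many points of $K$ and BHC holds at each of them, then $\tilde f\in \langle h\rangle_{2k}+Q_k(g)$ for some $k\in\N$. This means $\gamma=f_{\min}$ is feasible in \reff{sos:Put} at order $k$, so $f_k=f_{\min}$ and Lasserre's hierarchy has finite convergence. The main obstacle is the middle step: carefully passing from the intrinsic gradient/Hessian information encoded by CQC, SCC and SOSC to the extrinsic boundary hessian normal form in terms of the actual constraint polynomials $h_i,g_j$, so that Marshall's theorem applies verbatim to the quadratic module $Q(g)+\langle h\rangle$ appearing in Lasserre's hierarchy.
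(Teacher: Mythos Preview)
Your verification that CQC+SCC+SOSC imply Marshall's boundary hessian condition at each global minimizer is essentially the paper's Theorem~\ref{thm:opcd=>BHC}, carried out the same way (implicit function theorem with $g_{j_k}$ and $h_i$ as part of the new coordinates, SCC giving positivity of the linear coefficients, SOSC giving positive definiteness of the quadratic part in the free directions). That part is fine.

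The gap is in your final step. Marshall's theorem (Theorem~\ref{thm:marBHC} here) does \emph{not} give $\tilde f\in\langle h\rangle+Q(g)$; it only gives $\tilde f\in I(V_{\re}(h))+Q(g)$. In general $\langle h\rangle\subsetneq I(V_{\re}(h))$ unless the ideal $\langle h\rangle$ is real, which is not assumed. So you cannot conclude that $\gamma=f_{\min}$ is feasible in \reff{sos:Put}. The paper closes this gap as follows: write $\hat f:=\tilde f-\sigma_1$ with $\sigma_1\in Q(g)$ and $\hat f\equiv 0$ on $V_{\re}(h)$; by the Real Nullstellensatz there exist $\ell\in\N$ and $\sigma_2\in\Sigma\re[x]^2$ with $\hat f^{2\ell}+\sigma_2\in\langle h\rangle$. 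Choosing $c>0$ so that $s(t)=1+t+ct^{2\ell}$ is SOS, one checks that for every $\eps>0$,
\[
f-(f_{\min}-\eps)=\underbrace{\eps\,s(\hat f/\eps)+c\eps^{1-2\ell}\sigma_2+\sigma_1}_{\in\,Q_{k_0}(g)}\;-\;c\eps^{1-2\ell}\big(\hat f^{2\ell}+\sigma_2\big)\in \langle h\rangle_{2k_0}+Q_{k_0}(g)
\]
with a \emph{fixed} $k_0$ independent of $\eps$. Hence $f_{k_0}\ge f_{\min}-\eps$ for all $\eps>0$, so $f_{k_0}=f_{\min}$. Note that the paper never claims $f-f_{\min}\in\langle h\rangle+Q(g)$; it only obtains finite convergence via this $\eps$-perturbation, which is exactly the missing ingredient in your outline.
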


\begin{theorem} \label{mthm:opcd=gen}
Let $d_0, d_1, \ldots, d_{m_1}, d_1^{\prm}, \ldots, d_{m_2}^{\prm}$ be positive integers.
Then there exist a finite set of polynomials $\varphi_1,\ldots,\varphi_L$
(cf.~Condition~\ref{cond:gen}),
which are in the coefficients of polynomials
$f \in \re[x]_{d_0}$, $h_i \in \re[x]_{d_i}$ ($i\in [m_1]$),
$g_j \in \re[x]_{d_j^{\prm}}$ ($j\in [m_2]$),
such that if $\varphi_1,\ldots,\varphi_L$ do not vanish at the input polynomial,
then the constraint qualification, strict complementarity and
second order sufficiency conditions
hold at every local minimizer of \reff{pop:gen}.
\end{theorem}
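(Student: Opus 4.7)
The plan is to parametrize the input data by the finite-dimensional coefficient space
\[
\mc{D} := \re[x]_{d_0} \times \prod_{i=1}^{m_1}\re[x]_{d_i} \times \prod_{j=1}^{m_2}\re[x]_{d_j^{\prm}},
\]
and show that the subset of $\mc{D}$ on which some local minimizer violates CQC, SCC or SOSC lies in the vanishing locus of finitely many polynomials on $\mc{D}$. Since any local minimizer $u$ has a well-defined active set $J(u) \subseteq [m_2]$, one can treat each subset $J \subseteq [m_2]$ separately and take the union over the $2^{m_2}$ choices.

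Fix $J=\{j_1,\ldots,j_r\}$ and consider the polynomial system encoding a KKT point with active set exactly $J$: the equations $h_i(u)=0$ for $i\in[m_1]$, $g_j(u)=0$ for $j\in J$, together with the first order condition \reff{opcd:grad} involving multipliers $\lmd \in \re^{m_1}$ and $\mu \in \re^r$. This is a polynomial system in the unknowns $(u,\lmd,\mu)\in \re^{n+m_1+r}$ with coefficients depending polynomially on $c \in \mc{D}$, so the dimensions of equations and unknowns match. Introduce the incidence variety
\[
W_J \;:=\; \{(u,\lmd,\mu,c) \in \re^{n+m_1+r}\times \mc{D} : (u,\lmd,\mu) \text{ solves the above system for }c\}.
\]
Failure of CQC at $u$ is the rank deficiency of $G(u)$, i.e. the vanishing of a Gramian minor $\psi_{CQ}(u,c)$. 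Failure of SCC is $\mu_{j_k}=0$ for some $k\in[r]$. Failure of SOSC, given that SONC holds under CQC, is the existence of a nonzero $v \in G(u)^\perp$ with $v^T \nabla_x^2 L(u)v = 0$; equivalently a bordered-Hessian determinant $\psi_{SO}(u,\lmd,\mu,c)$ vanishes. Each failure mode thus carves out a subvariety of $W_J$ cut out by one additional polynomial equation.

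I would then project each such subvariety to $\mc{D}$. Since $W_J$ has fibrewise dimension zero for generic $c$ (the KKT system is square), each extra polynomial equation drops dimension by one along the fibre and, by elimination theory (Tarski--Seidenberg plus the Zariski-closure of a constructible image), the projection is contained in the zero set of a single polynomial $\varphi$ on $\mc{D}$ with coefficients computable from resultants of the defining equations. To guarantee $\varphi \not\equiv 0$, one needs to exhibit, for each $J$ and each failure mode, at least one $c^\ast \in \mc{D}$ where no local minimizer produces that failure; this can be done via a parametric transversality argument, perturbing $f$ and the $h_i,g_j$ slightly so that $0$ is a regular value of the KKT map, which forces all critical points to satisfy CQC, SCC and SOSC simultaneously. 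Collecting the polynomials $\varphi$ obtained for every $J$ and every failure mode yields the finite list $\varphi_1,\ldots,\varphi_L$ of Condition~\ref{cond:gen}.

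The main obstacle is the dimension/transversality step. One must verify carefully that for every fixed degree tuple $(d_0, d_i, d_j^{\prm})$ and every active set pattern $J$, a \emph{witness} $c^\ast$ where all three conditions hold at every local minimizer really exists; this is essentially a parametric Bertini/Sard statement and requires checking that varying only the coefficients of $f, h_i, g_j$ (keeping degrees fixed) is a rich enough family of perturbations to move critical points into general position. A secondary technical point is that local minimizers of \reff{pop:gen} are a subset of KKT points, so one must argue that it suffices to impose the conditions on all KKT points satisfying CQC; this is automatic, since a local minimizer at which CQC holds is a KKT point, and failure of CQC itself is already captured in the list $\varphi_1,\ldots,\varphi_L$. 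Once these ingredients are in place, non-vanishing of $\varphi_1,\ldots,\varphi_L$ at the given input polynomial implies that CQC, SCC, and SOSC hold at every local minimizer of \reff{pop:gen}.
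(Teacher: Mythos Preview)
Your proposal follows the same high-level strategy as the paper: enumerate active-set patterns $J \subseteq [m_2]$, encode each failure mode (CQC, SCC, SOSC) as an extra polynomial equation on top of the KKT system, and use elimination to project the failure locus to the coefficient space $\mc{D}$. The paper's execution is considerably more explicit. Rather than invoking an abstract projection, Condition~\ref{cond:gen} lists concrete polynomials: classical resultants $Res$ for the case $|J| > n-m_1$ (which you do not isolate), discriminants $\Delta$ for CQC failure, and two custom eliminants $\mathscr{R}$ and $\mathscr{D}$ built by homogenizing in $x$ and applying the projective elimination theorem (Theorem~\ref{thm:elim}). Homogenization matters here: your affine projection via Tarski--Seidenberg yields only a semialgebraic image, and without passing to projective space you have no direct mechanism producing \emph{polynomials} that cut out (a set containing) the image; the paper's systems \reff{hmg:ktp:q=0} and \reff{ktHs-hg:xy-p} are set up precisely so that Theorem~\ref{thm:elim} applies.

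There is a genuine gap in your non-vanishing step. You assert that making $0$ a regular value of the KKT map ``forces all critical points to satisfy CQC, SCC and SOSC simultaneously.'' Regularity of the KKT map means the bordered Hessian $H(u)$ is nonsingular; this does yield CQC (a rank-deficient $G(u)$ forces $H(u)$ singular via a null vector of the form $(0,\nu)$) and, through Lemma~\ref{lm:SOSC=nsig}, SOSC. But it does \emph{not} force strict complementarity: a KKT point can perfectly well have $\mu_{j_k}=0$ while $H(u)$ remains nonsingular. The paper handles SCC separately via $\mathscr{R}$ (item~(c) of Condition~\ref{cond:gen}), and proves that $\mathscr{R}$ and $\mathscr{D}$ do not vanish identically by exhibiting explicit witness families in the Appendix (e.g.\ $\hat p_i = x_i^{d_i}-1$ together with a generic $f_0$ in the remaining variables) and checking directly, rather than by an abstract Sard/transversality principle. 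Your argument could be repaired by applying parametric transversality to the \emph{augmented} system with $\mu_{j_k}=0$ appended (and separately verifying that varying the coefficients of $f,h_i,g_j$ is a submersive family for that system), but as written the SCC witness is missing.
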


The proof of Theorem~\ref{mthm:opc=>fcvg} uses
a result of Marshall on boundary hessian conditions \cite{Mar06,Mar09},
and the proof of Theorem~\ref{mthm:opcd=gen}
uses elimination theory in computational algebra.
Theorem~\ref{mthm:opcd=gen} implies that
these classical optimality conditions hold in a Zariski open set
in the space of input polynomials with given degrees.
The paper is organized as follows.
Section~2 presents some backgrounds in the field;
Section~3 is mostly to prove Theorem~\ref{mthm:opc=>fcvg};
Section~4 is mostly to prove Theorem~\ref{mthm:opcd=gen};
Section~5 makes some discussions.

\section{Preliminary}
\setcounter{equation}{0}

\subsection{Notation}
The symbol $\re$ (resp., $\cpx$) denotes the set of real (resp., complex) numbers.
A polynomial is called a form if it is homogeneous.
For $f \in \re[x]$, $\widetilde{f}$ denotes the homogenization of $f$,
i.e., $\widetilde{f}(\tilde{x}) = x_0^{\deg(f)} \cdot f(x/x_0)$
with $\tilde{x}:=(x_0,x_1,\ldots,x_n)$.
The symbol $\|\cdot\|_2$ denotes the standard $2$-norm.
For a symmetric matrix $X$, $X\succeq 0$ (resp., $X\succ 0$) means
$X$ is positive semidefinite (resp. positive definite).
The determinant of a square matrix $A$ is $\det A$.
The $N\times N$ identity matrix is denoted as $I_N$.
If $p$ is a polynomial in $x$, $\nabla p$ (resp., $\nabla^2 p$) denotes the
gradient (resp., Hessian) of $p$ with respect to $x$;
if $p$ has variables in addition to $x$, $\nabla_x p$ (resp., $\nabla_x^2 p$)
denotes the gradient (resp., Hessian) of $p$ with respect to $x$.
For $p_1,\ldots,p_r \in \re[x]$, $Jac(p_1,\ldots,p_r)|_u$ denotes
the Jacobian of $(p_1,\ldots, p_r)$ at $u$, i.e.,
$Jac(p_1,\ldots,p_r)|_u = (\pt p_i(u)/\pt x_j)_{1\leq i\leq r, 1\leq j\leq n}$.

\subsection{Some basics in real algebra}

Here we give a short review on elementary real algebra.
More details can be found in \cite{BCR,CLO97}.

An ideal $I$ of $\re[x]$ is a subset such that $ I \cdot \re[x] \subseteq I$
and $I+I \subseteq I$. Given $p_1,\ldots,p_m \in \re[x]$,
$\langle p_1,\cdots,p_m \rangle $ denotes
the smallest ideal containing all $p_i$, which is the set
$p_1 \cdot \re[x] + \cdots + p_m  \cdot \re[x]$.
A variety is a subset of $\cpx^n$ that consists of
common zeros of a set of polynomials.
A real variety is the intersection of a variety and the real space $\re^n$.
Given a polynomial tuple $p=(p_1,\ldots, p_r)$, denote
\begin{align*}
V(p) & := \{v\in \cpx^n: \, p_1(v) = \cdots = p_r(v) = 0 \}, \\
V_{\re}(p) & := \{v\in \re^n: \, p_1(v) = \cdots = p_r(v) = 0 \}.
\end{align*}
Every set $T \subset \re^n$ is contained in a real variety.
The smallest one containing $T$
is called the {\it Zariski} closure of $T$, and is denoted by $Zar(T)$.
In the Zariski topology on $\re^n$, the real varieties are closed sets,
and the complements of real varieties are open sets.
Denote $I(T):=\{q\in \re[x]: \, q(u) = 0 \,\forall \, u \in T\}$,
which is an ideal in $\re[x]$ and
is called the {\it vanishing ideal} of $T$.

Let $h=(h_1,\ldots,h_{m_1})$ and $g=(g_1,\ldots,g_{m_2})$
be the polynomial tuples as in \reff{pop:gen}, and
$K$ be the feasible set of \reff{pop:gen}.
Recall the definitions of $\langle h \rangle_{2k}$ and $Q_k(g)$ in the Introduction.
Clearly, the union $\cup_{k\in \N} \langle h \rangle_{2k}$
is the ideal $\langle h \rangle := \langle h_1,\ldots,h_{m_1} \rangle$.
The union $Q(g):=\cup_{k\in\N} Q_k(g)$ is called
the {\it quadratic module} generated by $g$.
The set $\langle h \rangle + Q(g)$ is called {\it archimedean}
if $R - \|x\|_2^2 \in \langle h \rangle + Q(g)$ for some $R>0$.
Clearly, if $p \in \langle h \rangle + Q(g)$, then $p$ is nonnegative on $K$,
while the converse is not always true.
However, if $p$ is positive on $K$ and $\langle h \rangle + Q(g)$
is archimedean, then $p \in \langle h \rangle + Q(g)$.
This is called {\it Putinar's Positivstellensatz}.

\begin{theorem} [Putinar, \cite{Put}] \label{Put-Pos}
Let $K$ be the feasible set of \reff{pop:gen}.
Suppose $\langle h \rangle + Q(g)$ is archimedean.
If $p \in \re[x]$ is positive on $K$, then $p \in \langle h \rangle + Q(g)$.
\end{theorem}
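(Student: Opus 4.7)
The plan is to argue by contradiction using the classical duality between quadratic modules and positive linear functionals, following Putinar's operator-theoretic approach. Set $M := \langle h \rangle + Q(g)$ and suppose $p > 0$ on $K$ but $p \notin M$. A preliminary observation is that the archimedean hypothesis $R - \|x\|_2^2 \in M$ propagates, by induction on degree, to yield for every $q \in \re[x]$ some $N > 0$ with $N \pm q \in M$; thus $1$ is an order unit for the cone $M$.

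Since $p \notin M$ and $1$ is an order unit, a Hahn--Banach/Eidelheit separation in $\re[x]$ produces a nonzero linear functional $L : \re[x] \to \re$ with $L \geq 0$ on $M$ and $L(p) \leq 0$, which we normalize so that $L(1) = 1$.

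I would then build a GNS-type representation. Because $\Sig \re[x]^2 \subseteq M$, the bilinear form $(q_1, q_2) \mapsto L(q_1 q_2)$ is positive semidefinite, so quotienting by its kernel and completing produces a Hilbert space $H$ on which multiplication by each $x_i$ defines a symmetric operator $M_{x_i}$. The containment $R - x_i^2 \in M$ (obtained by adding $\sum_{j\neq i} x_j^2 \in \Sig \re[x]^2$ to $R - \|x\|_2^2$) gives the operator bound $\|M_{x_i}\|^2 \le R$ on the dense image, so $M_{x_1}, \ldots, M_{x_n}$ extend to commuting bounded self-adjoint operators on $H$. Their joint spectral theorem, combined with Haviland's theorem on the multidimensional moment problem, then produces a compactly supported Borel probability measure $\mu$ on $\re^n$ with $L(q) = \int q\, d\mu$ for every $q \in \re[x]$.

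To finish, I would verify $\mathrm{supp}(\mu) \subseteq K$: since $\pm h_i \in \langle h \rangle \subseteq M$, one has $L(h_i q) = 0$ for every $q \in \re[x]$, which forces $h_i \equiv 0$ $\mu$-a.e.; and $L(g_j q^2) \ge 0$ for every $q$ forces $g_j \ge 0$ $\mu$-a.e. Then $L(p) = \int_K p\, d\mu > 0$, since $p > 0$ on the compact set $K$ and $\mu$ has total mass $1$, contradicting $L(p) \le 0$. The main obstacle, and the precise reason the theorem requires archimedeanness, is establishing the operator bound on each $M_{x_i}$; without it the spectral theorem cannot deliver a compactly supported representing measure, and indeed Putinar's conclusion genuinely fails for non-archimedean quadratic modules.
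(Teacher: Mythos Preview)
The paper does not prove this theorem at all: it is stated as Putinar's result with a citation to \cite{Put} and used as a black box. So there is no ``paper's own proof'' to compare against.

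Your sketch is the standard operator-theoretic argument and is essentially correct. A couple of minor remarks. First, invoking Haviland's theorem alongside the joint spectral theorem is redundant and slightly misplaced: once you have commuting bounded self-adjoint operators $M_{x_1},\ldots,M_{x_n}$, the spectral theorem alone produces a projection-valued measure $E$ on $\re^n$, and $\mu(\cdot)=\langle E(\cdot)[1],[1]\rangle$ is already the representing Borel measure you want; Haviland's criterion (nonnegativity of $L$ on all polynomials nonnegative on a prescribed closed set) is a different statement and is not what you have established about $L$. Second, to pass from $\int g_j q^2\,d\mu\ge 0$ for all $q$ to $g_j\ge 0$ on $\mathrm{supp}(\mu)$, you are implicitly using that polynomials are dense in $C(\mathrm{supp}(\mu))$, which is fine since $\mu$ has compact support; it is worth saying so explicitly. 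With those tweaks the argument is complete and is indeed Putinar's original approach.
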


\subsection{The boundary hessian condition}

Let $K$ be the feasible set of \reff{pop:gen} and $h=(h_1,\ldots,h_{m_1})$.
Let $u$ be a local minimizer of \reff{pop:gen},
and $\ell$ be the local dimension of $V_{\re}(h)$ at $u$ (cf.~\cite[\S2.8]{BCR}).
We first state a condition about parameterizing $K$ around $u$ locally,
which was proposed by Marshall.

\begin{cond}[Marshall,\cite{Mar09}] \label{cod:par-u}
i) The point $u$ on $V_{\re}(h)$ is nonsingular
and there exists a neighborhood $\mc{O}$ of $u$ such that
$V_{\re}(h) \cap \mc{O}$ is parameterized by
uniformizing parameters $t_1, \ldots, t_\ell$;
ii) there exist $1\leq \nu_1 < \cdots < \nu_r \leq m_2$, such that
$t_j = g_{\nu_j}$ ($j=1,\ldots,r$) on $V_{\re}(h) \cap \mc{O}$
and $K \cap \mc{O}$ is defined by
$t_1 \geq 0, \ldots, t_r \geq 0$.
\end{cond}

The following condition was introduced by Marshall \cite{Mar06,Mar09} in
studying Putinar type representation for nonnegative polynomials,
and it is called the {\it boundary hessian condition} (BHC).

\begin{cond}[Marshall,\cite{Mar06,Mar09}] \label{def:BDH}
Assume Condition~\ref{cod:par-u} holds.
Expand $f$ locally around $u$ as $f=f_0+f_1 +f_2 + \cdots$,
with every $f_i$ being homogeneous of degree $i$ in $t_1,\ldots,t_\ell$.
The linear form $f_1 = a_1 t_1 +\cdots + a_r t_r$
for some positive constants $a_1 >0, \ldots, a_r >0$,
and the quadratic form $f_2(0, \ldots, 0, t_{r+1}, \ldots, t_{\ell})$
is positive definite in $(t_{r+1}, \ldots, t_{\ell})$.
\end{cond}

If $K$ is compact and the boundary hessian condition holds
at every global minimizer,
then \reff{pop:gen} has finitely many global minimizers.
(See the proof of Theorem~9.5.3 in \cite{MarBk}.)
Marshall proved the following important result.

\begin{theorem}(Marshall, \cite[Theorem~9.5.3]{MarBk}) \label{thm:marBHC}
Let $V=V_{\re}(h)$ and $f_{min}$ be the minimum of \reff{pop:gen}.
If $\langle h \rangle + Q(g)$ is archimedean and
the boundary hessian condition holds at every global minimizer of \reff{pop:gen},
then $f-f_{min} \in I(V)+Q(g)$.
\end{theorem}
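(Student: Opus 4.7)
My plan is to convert the local normal form supplied by BHC into a global certificate via Putinar's Positivstellensatz (Theorem~\ref{Put-Pos}). Set $p := f - f_{min}$, so $p \geq 0$ on $K$. Since $\langle h\rangle + Q(g)$ is archimedean, $K$ is compact, and the remark preceding the theorem (essentially the argument in the proof of Theorem~9.5.3 in \cite{MarBk}) shows that BHC at every global minimizer forces the set of global minimizers $U = \{u_1,\dots,u_N\}$ to be finite, with $p^{-1}(0)\cap K = U$. The target is to write $p = q + \sigma_0 + \sum_{j=1}^{m_2} \sigma_j g_j$ with $q \in I(V)$ and each $\sigma_j \in \Sigma\re[x]^2$.

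At each $u_i$, I would introduce the uniformizing parameters $t_1,\dots,t_\ell$ of $V_\re(h)$ supplied by Condition~\ref{cod:par-u}, with $t_j = g_{\nu_j}$ for $j \leq r$, and then apply Condition~\ref{def:BDH} to get the local expansion
\[
p \;=\; \sum_{j=1}^{r} a_{ij}\, t_j \;+\; \tilde Q_i(t_{r+1},\dots,t_\ell) \;+\; R_i(t),
\]
with $a_{ij} > 0$, $\tilde Q_i$ positive definite in the free variables, and $R_i$ collecting the cross terms and contributions of order $\geq 3$. Completing the square against $\tilde Q_i$ on the free directions and absorbing the cross terms into the strictly positive linear coefficients yields, on a neighborhood of $u_i$ and modulo $I(V)$, a polynomial identity of the form $p = \sum_{j=1}^r b_{ij}(x)\, g_{\nu_j}(x) + \tau_i(x) + \rho_i(x)$, where $b_{ij}(u_i) > 0$, $\tau_i$ is an explicit sum of squares, and $\rho_i$ vanishes at $u_i$ to order at least three along $V_\re(h)$.

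Next I would patch the local data into a single $s \in Q(g)$ whose leading part at every $u_i$ equals $\sum_j b_{ij}\, g_{\nu_j} + \tau_i$; this is a finite jet-interpolation problem, solvable by positive SOS-weighted combinations of the active $g_{\nu_j}$ together with a global SOS realising the positive-definite Hessian data $\tilde Q_i$. After this subtraction, $p - s$ is nonnegative on $K$, strictly positive on $K\setminus U$, and its behaviour near each $u_i$ is governed entirely by $\rho_i$.

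The main obstacle is the final step: concluding that $p - s$, which only vanishes on $U$ rather than being strictly positive on $K$, still lies in $I(V) + Q(g)$. My plan is to pick a strictly positive auxiliary $w \in I(V) + Q(g)$ that vanishes at each $u_i$ to the same order as $\rho_i$, so that $(p - s) + \epsilon w > 0$ on $K$ for every $\epsilon > 0$, apply Theorem~\ref{Put-Pos} to get representations parametrised by $\epsilon$, and then use the positive-definite quadratic data $\tilde Q_i$ to bound the SOS multipliers uniformly in $\epsilon$ so that a subsequential limit as $\epsilon\downarrow 0$ remains inside a single truncated quadratic module. This uniform-in-$\epsilon$ estimate---a quantitative version of the positivity of $\tilde Q_i$ that turns the order-of-vanishing of $\rho_i$ into compactness of representatives---is where I expect the real difficulty to lie, and it is precisely the information that BHC is designed to supply.
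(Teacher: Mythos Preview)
The paper does not prove this statement at all; Theorem~\ref{thm:marBHC} is quoted verbatim from Marshall's book \cite[Theorem~9.5.3]{MarBk} and used as a black box in the proof of Theorem~\ref{mthm:opc=>fcvg}. So there is no ``paper's own proof'' to compare your proposal against.

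That said, your outline tracks the general shape of Marshall's argument (local normal form from BHC, subtract an element of $Q(g)$ matching the leading behaviour at each minimizer, then deal with the residual), but the final step you describe has a genuine gap. Your plan is to apply Putinar to $(p - s) + \epsilon w$ for each $\epsilon > 0$ and pass to a limit by proving uniform degree bounds on the representations. Such uniform bounds are exactly what fails in general: Scheiderer's non-existence result \cite{Sch05} (also discussed in Section~5 of this paper) shows that degree bounds for Putinar representations typically blow up as the polynomial approaches the boundary of the cone of nonnegatives. The BHC data controls the \emph{order of vanishing} of $p-s$ at each $u_i$, but that alone does not translate into compactness of representatives in a fixed $Q_k(g)$ without further machinery.

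Marshall's actual proof avoids the $\epsilon$-limit entirely. It relies on Scheiderer's local--global principle for archimedean quadratic modules: if $p \geq 0$ on $K$ has finitely many zeros on $K$, and at each zero $u_i$ the image of $p$ lies in the quadratic module generated by the $g_j$ inside the completed local ring $\widehat{\mathcal{O}}_{V,u_i}$, then $p \in I(V) + Q(g)$. The BHC expansion $p = \sum a_j t_j + \tilde Q_i + (\text{higher order})$ with $a_j > 0$ and $\tilde Q_i \succ 0$ is precisely what verifies this local membership in the formal power series ring, after which the local--global result finishes the argument in one stroke. Your jet-interpolation and $\epsilon$-limit steps are attempting to reinvent this mechanism by hand, and as written they do not close.
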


%
In the above, if $I(V) = \langle h \rangle$
(i.e., $\langle h \rangle$ is {\it real}, \cite[\S4.1]{BCR}),
then $f-f_{min} \in \langle h \rangle + Q(g)$.
Theorem~\ref{thm:marBHC} can also be found in
Scheiderer's survey \cite[Theorem~3.1.7]{Sch09}.

\subsection{Resultants and discriminants}

Here, we review some basics of resultants and discriminants.
We refer to \cite{CLO98,GKZ,Nie-dis,Stu02} for more details.

Let $f_1,\ldots, f_n$ be forms in $x=(x_1,\ldots,x_n)$.
The resultant $Res(f_1,\ldots, f_n)$ is a polynomial,
in the coefficients of $f_1, \ldots,f_n$, having the property that
\[
Res(f_1,\ldots,f_n) = 0 \quad  \Longleftrightarrow \quad
\exists \, 0 \ne u\in \cpx^n, \, f_1(u)=\cdots = f_n(u)=0.
\]
The discriminant of a form $f$ is defined as
\[
\Delta(f) \, := \, Res\left(\frac{\pt f}{\pt x_1},\ldots, \frac{\pt f}{\pt x_n} \right).
\]
So, it holds that
\[
\Delta(f) = 0 \quad  \Longleftrightarrow \quad
\exists \, 0 \ne u\in \cpx^n, \, \nabla f(u)=0.
\]
Both $Res(f_1,\ldots, f_n)$ and $\Delta(f)$
are homogeneous, irreducible and have integer coefficients.

Discriminants and resultants are also defined for nonhomogeneous polynomials.
If one of $f_0,f_1,\ldots, f_n$ is not a form in $x$,
then $Res(f_0,f_1,\ldots, f_n)$ is defined to be
$Res(\widetilde{f_0}, \ldots, \widetilde{f_n})$,
where each $\widetilde{f_i}$ is the homogenization of $f_i$.
Similarly, if $f$ is not a form, then
$\Delta(f)$ is defined to be $\Delta(\widetilde{f})$.

Discriminants are also defined for several polynomials \cite{Nie-dis}.
Let $f_1,\ldots, f_m$ be forms in $x$ of degrees $d_1,\ldots,d_m$ respectively,
and $m \leq n-1$. Suppose at least one $d_i>1$.
The discriminant of $f_1,\ldots,f_m$, denoted by $\Delta(f_1,\ldots,f_m)$,
is a polynomial in the coefficients of $f_1,\ldots, f_m$, having the property that
$\Delta(f_1,\ldots,f_m) = 0$
if and only if there exists $0 \ne u \in \cpx^n$ satisfying
\be \label{dfeqn:mul-dis}
f_1(u) = \cdots = f_m(u) = 0, \quad
\rank \bbm \nabla f_1(u) &  \cdots & \nabla f_m(u) \ebm \, < \, m.
\ee
If one of $f_1,\ldots,f_m$ is nonhomogeneous and $m\leq n$,
then $\Delta(f_1,\ldots,f_m)$ is defined to be
$\Delta(\widetilde{f_1}, \ldots,\widetilde{f_m})$.
In the nonhomogeneous case, $\Delta(f_1,\ldots,f_m) =0$
if there exists $u \in \cpx^n$ satisfying \reff{dfeqn:mul-dis} (cf. \cite{Nie-dis}).

We conclude this section with an elimination theorem
for general homogeneous polynomial systems.

\begin{theorem}(Elimination Theory, \cite[Theorem~5.7A]{Hrtsn}) \label{thm:elim}
Let $f_1, \ldots, f_r$ be homogeneous polynomials in $x_0,\ldots, x_n$,
having indeterminate coefficients $a_{ij}$.
Then there is a set $g_1,\ldots, g_t$ of polynomials in the $a_{ij}$,
with integer coefficients, which
are homogeneous in the coefficients of each $f_i$ separately,
with the following property: for any field $k$, and for any set of special
values of the $a_{ij} \in k$, a necessary and sufficient condition
for the $f_i$ to have a common zero different from $(0,\ldots,0)$
is that the $a_{ij}$ are a common zero of the polynomials $g_j$.
\end{theorem}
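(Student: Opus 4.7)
The plan is to derive this from the ``main theorem of elimination theory'' --- the properness of projective space over an arbitrary base --- made explicit through a Macaulay-style resultant construction. Geometrically, one wants to show that the image in coefficient space of the incidence set
\[
Z \;=\; \{(x,a) : f_1(x,a) = \cdots = f_r(x,a) = 0\} \;\subset\; \P^n \times \mathrm{Spec}\,\Z[a_{ij}]
\]
under projection to $\mathrm{Spec}\,\Z[a_{ij}]$ is Zariski closed, cut out by polynomials with integer coefficients that are separately homogeneous in the coefficients of each $f_i$.

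First I would apply the graded Nullstellensatz: for any field $k$ and any specialization of the $a_{ij}$, the $f_i$ have no common nonzero projective zero over $\overline{k}$ if and only if the irrelevant ideal satisfies $(x_0,\ldots,x_n)^N \subseteq (f_1,\ldots,f_r)$ for some $N$. The decisive point is that this $N$ can be chosen \emph{uniformly}: there exists an integer $N_0$ depending only on $n$ and $d_1,\ldots,d_r$ such that if the containment holds for any $N$, then it already holds at $N=N_0$. I would extract this threshold from a regularity bound (Macaulay's theorem on Hilbert series, or a Castelnuovo--Mumford regularity estimate for ideals generated in bounded degrees), which is insensitive to the characteristic and to the specialization.

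Second, with $N_0$ fixed, I would form the $\Z[a_{ij}]$-linear map of free modules
\[
\Phi \colon \bigoplus_{i=1}^r \Z[a_{ij}][x]_{N_0-d_i} \;\longrightarrow\; \Z[a_{ij}][x]_{N_0},
\qquad (h_1,\ldots,h_r) \;\mapsto\; \sum_{i=1}^r h_i f_i,
\]
and let $M$ be its matrix in the standard monomial bases. Every entry of $M$ is an integer linear combination of the $a_{ij}$; moreover, the columns of $M$ are partitioned into $r$ blocks, where the $i$-th block depends only on the coefficients of $f_i$ and depends linearly on them. By the previous step, a specialization makes the $f_i$ zero-free in $\P^n$ exactly when $\Phi$ becomes surjective after specialization, which is precisely when $M$ attains its full row rank. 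I then take $g_1,\ldots,g_t$ to be the maximal minors of $M$ (of size equal to $\dim \Z[a_{ij}][x]_{N_0}$): their simultaneous vanishing is equivalent to the existence of a common nontrivial zero. Integer coefficients are inherited from $M$, and the homogeneity claim follows because each such minor picks a fixed number of columns from each block and is linear in the entries of those columns, hence homogeneous in the coefficients of each $f_i$ separately.

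The main obstacle is the uniform regularity bound in the first step. The Nullstellensatz over a single field supplies \emph{some} $N$, but one needs $N_0$ bounded purely in terms of $n, d_1,\ldots,d_r$ and independent of both the field $k$ and the specialization; without such uniformity, the construction of the $g_j$ would require infinitely many matrices. The usual shortcut --- Macaulay's resultant --- handles the case $r = n+1$ cleanly, but the general case ($r$ arbitrary, and $\P^n$ possibly of larger dimension than needed) requires either a careful regularity argument or a reduction via auxiliary generic linear sections. Once such $N_0$ is in hand, the remainder is routine linear algebra over the polynomial ring $\Z[a_{ij}]$.
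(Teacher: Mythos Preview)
The paper does not prove this statement; it is quoted verbatim as a classical result from Hartshorne (Theorem~I.5.7A) and used as a black box in \S4.1. So there is no ``paper's own proof'' to compare against. That said, your outline is a correct and standard route to the result, essentially the one found in Hartshorne or Mumford.

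Regarding the obstacle you flag: you do not actually need an explicit Macaulay or Castelnuovo--Mumford bound. A cleaner way to obtain the uniform $N_0$ is by Noetherianity. For each $N$, let $V_N \subset \mathrm{Spec}\,\Z[a_{ij}]$ be the closed subscheme where $\Phi_N$ fails to be surjective (cut out by the maximal minors of your matrix $M$ at level $N$). Since surjectivity of $\Phi_N$ implies surjectivity of $\Phi_{N+1}$ (multiply by each $x_i$), one has $V_1 \supseteq V_2 \supseteq \cdots$, and this descending chain of closed subschemes of the Noetherian scheme $\mathrm{Spec}\,\Z[a_{ij}]$ stabilizes at some $N_0$. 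The graded Nullstellensatz over each residue field identifies $\bigcap_N V_N$ with the locus where the $f_i$ have a common projective zero, so that locus equals $V_{N_0}$. This bypasses any characteristic-dependent or specialization-dependent regularity estimate. If you prefer an explicit bound, the Macaulay bound $N_0 = 1 + \sum_{i=1}^{\min(r,n+1)}(d_i'-1)$ (degrees sorted decreasingly) does work, via the observation that after passing to $\bar{k}$ one can replace the $f_i$ by $n+1$ generic linear combinations of equal top degree forming a regular sequence; but the Noetherian argument is shorter and is what most textbook proofs use.

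Your homogeneity argument is fine: each maximal minor selects a fixed number of columns from each block, and the Leibniz expansion shows it is homogeneous of that degree in the corresponding $f_i$'s coefficients.
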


\section{Optimality conditions and Finite Convergence}
\setcounter{equation}{0}

This section is to prove Theorem~\ref{mthm:opc=>fcvg}.
It is based on the following theorem.

\begin{theorem} \label{thm:opcd=>BHC}
Let $u$ be a local minimizer of \reff{pop:gen}.
If the constraint qualification, strict complementarity and
second order sufficiency conditions hold at $u$,
then $f$ satisfies the boundary hessian condition at $u$.
\end{theorem}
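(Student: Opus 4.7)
The plan is to produce, from CQC alone, a set of uniformizing parameters $t_1,\ldots,t_\ell$ realizing Condition~\ref{cod:par-u}, and then to read off the two BHC requirements directly from the first order condition (with strict complementarity) and from SOSC.

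\textbf{Step 1: Construction of uniformizing parameters.} Let $J(u)=\{\nu_1,\ldots,\nu_r\}$. By CQC, the vectors $\nabla h_1(u),\ldots,\nabla h_{m_1}(u),\nabla g_{\nu_1}(u),\ldots,\nabla g_{\nu_r}(u)$ are linearly independent in $\re^n$, so in particular $\nabla h_1(u),\ldots,\nabla h_{m_1}(u)$ are linearly independent. Hence $u$ is a nonsingular point of $V_\re(h)$ and its local dimension there is $\ell=n-m_1$. Set $t_j:=g_{\nu_j}$ for $j=1,\ldots,r$. Because the full list of gradients is independent in $\re^n$, the differentials of $g_{\nu_1},\ldots,g_{\nu_r}$ restricted to $T_uV_\re(h)=\bigcap_i\ker\nabla h_i(u)$ are linearly independent. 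Choose linear forms $t_{r+1},\ldots,t_\ell$ on $\re^n$ so that $dt_1(u),\ldots,dt_\ell(u)$ form a basis of $T_uV_\re(h)^*$. Then $(t_1,\ldots,t_\ell)$ is a local diffeomorphism between a neighborhood of $u$ on $V_\re(h)$ and a neighborhood of $0$ in $\re^\ell$. Since every inactive $g_j$ satisfies $g_j(u)>0$, it stays positive on a smaller neighborhood $\mc{O}$, and there $K\cap\mc{O}=\{t_1\geq 0,\ldots,t_r\geq 0\}$. This gives Condition~\ref{cod:par-u}.

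\textbf{Step 2: The linear part $f_1$.} Expand $f=f_0+f_1+f_2+\cdots$ in $t_1,\ldots,t_\ell$ on $V_\re(h)\cap\mc{O}$. Combining the FOOC \reff{opcd:grad} with complementarity \reff{op-cd:comc}, we have $\nabla f(u)=\sum_i\lmd_i\nabla h_i(u)+\sum_{j=1}^r\mu_{\nu_j}\nabla g_{\nu_j}(u)$. Restricting the corresponding covector to $T_uV_\re(h)$ annihilates all $\nabla h_i(u)$-terms, and on $V_\re(h)$ one has $g_{\nu_j}=t_j$, so $\nabla g_{\nu_j}(u)$ restricts to $dt_j(u)$. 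Thus
\[
f_1=\sum_{j=1}^r \mu_{\nu_j}\,t_j .
\]
By SCC \reff{optcd-scc}, $\mu_{\nu_j}>0$ for every active index, so each coefficient is strictly positive, which is the linear part of BHC.

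\textbf{Step 3: The quadratic part $f_2$.} Let $M:=\{x:h_i(x)=0\ \forall i,\ g_{\nu_j}(x)=0\ \forall j\}$, a smooth submanifold near $u$ with $T_uM=G(u)^\perp$. In the $t$-coordinates, $M\cap\mc{O}=\{t_1=\cdots=t_r=0\}$. On $M$ one has $L\equiv f$ (the active $h_i,g_{\nu_j}$ all vanish), and FOOC gives $\nabla_xL(u)=0$. Therefore the intrinsic Hessian of $L|_M$ at $u$ equals the restriction of the ambient bilinear form $\nabla_x^2 L(u)$ to $T_uM=G(u)^\perp$, with no connection-correction terms. SOSC \reff{sosc-optcnd} then says that the Hessian of $f|_M$ at $u$ is positive definite. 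In the coordinates $(t_{r+1},\ldots,t_\ell)$ on $M$, this Hessian is exactly (twice) the quadratic form $f_2(0,\ldots,0,t_{r+1},\ldots,t_\ell)$, which is therefore positive definite, completing the BHC.

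\textbf{Main obstacle.} The single substantive point is Step~3: identifying the restriction of the \emph{ambient} Hessian $\nabla_x^2L(u)$ to $G(u)^\perp$ with the \emph{intrinsic} Hessian of $f|_M$ expressed in curvilinear $t$-coordinates. This identification is valid only because $\nabla_xL(u)=0$, which kills the Christoffel-type correction terms that would otherwise appear; it is the place where the Lagrangian (rather than $f$ itself) enters and where SOSC must be formulated with $\nabla_x^2L$, not $\nabla_x^2f$. All other steps are direct consequences of the implicit function theorem applied to CQC.
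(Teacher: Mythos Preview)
Your proof is correct and takes essentially the same approach as the paper: both construct uniformizing parameters from CQC via the implicit function theorem, read off $f_1$ from FOOC with strict complementarity, and identify $f_2(0,\ldots,0,t_{r+1},\ldots,t_\ell)$ with the restriction of $\nabla_x^2 L(u)$ to $G(u)^\perp$, using the key fact $\nabla_x L(u)=0$ to kill the second-order correction terms. The only difference is presentational: the paper carries out Step~3 by an explicit chain-rule computation (after a preliminary linear change of coordinates making the Jacobian of the parametrization equal to $I_n$ at $u$), whereas you phrase the same computation in intrinsic geometric language.
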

\begin{proof}
Let $J(u) := \{ j_1, \ldots, j_r \}$ be the index set
of inequality constraints that are active at $u$.
For convenience, we can generally assume $u=0$, up to a shifting.
Since the constraint qualification condition holds at $0$, the gradients
\[
\nabla h_1(0), \ldots,  \nabla h_{m_1}(0),
\nabla g_{j_1} (0), \ldots, \nabla g_{j_r}(0)
\]
are linearly independent.
The origin $0$ is a nonsingular point of the real variety $V_{\re}(h)$,
because the gradients $\nabla h_1(0), \ldots,  \nabla h_{m_1}(0)$
are linearly independent.
Up to a linear coordinate transformation,
we can further assume that
\be \label{nab:gh=unit}
\left\{\baray{rcl}
\bbm \nabla g_{j_1} (0) & \cdots &  \nabla g_{j_r} (0) \ebm
& = & \bbm I_r \\ 0 \ebm, \\
\bbm \nabla h_1 (0) & \cdots &  \nabla h_{m_1} (0) \ebm
& = & \bbm 0 \\ I_{m_1} \ebm.
\earay\right.
\ee
Let $\ell := n - m_1$, which is the local dimension of $V_{\re}(h_1, \ldots, h_{m_1})$
at $0$ (cf.~\cite[Prop.~3.3.10]{BCR}).
Define a function $\varphi(x):=
(\varphi_{I}(x), \, \varphi_{II}(x) ,\, \varphi_{III}(x)): \re^n \to \re^n $ as
\be  \label{df:varphi}
\varphi_{I}(x) = \bbm g_{j_1} (x) \\ \vdots \\ g_{j_r}(x) \ebm, \quad
\varphi_{II}(x) = \bbm x_{r+1} \\ \vdots \\  x_{\ell} \ebm, \quad
\varphi_{III}(x) = \bbm h_1(x) \\ \vdots \\ h_{m_1}(x) ) \ebm.
\ee
Clearly, $\varphi(0)=0$, and the Jacobian of $\varphi$ at $0$
is the identity matrix $I_n$.
Thus, by the implicit function theorem, in a neighborhood $\mc{O}$ of $0$,
the equation $t=\varphi(x)$ defines a smooth function $x = \varphi^{-1}(t)$.
So, $t=(t_1,\ldots, t_n)$ can serve as a coordinate system
for $\re^n$ around $0$ and $t=\varphi(x)$.
In the $t$-coordinate system and in the neighborhood $\mc{O}$,
$V_{\re}(h_1,\ldots,h_{m_1})$ is defined by linear equations $t_{\ell+1}=\cdots=t_{n}=0$,
and $K \cap \mc{O}$ can be equivalently described as
\[
t_1 \geq 0, \ldots, t_r \geq 0, \quad
t_{\ell+1} = \cdots =t_n=0.
\]
Let $\lmd_i (i \in [m_1])$ and $\mu_j (j \in [m_2])$ be the Lagrange multipliers
satisfying \reff{opcd:grad}-\reff{op-cd:comc}. Define the Lagrange function
\[
L(x) := f(x) - \sum_{i=1}^{m_1} \lmd_i h_i(x) - \sum_{k=1}^{r} \mu_{j_k} g_{j_k}(x).
\]
Note that $\nabla_x L(0)=0$. In the $t$-coordinate system, define functions
\[
F(t) :=  f(\varphi^{-1}(t)), \quad
\widehat{L}(t) := L(\varphi^{-1}(t)) =
F(t)- \sum_{i=\ell+1}^{n} \lmd_{i-\ell} t_i - \sum_{k=1}^{r} \mu_{j_k} t_k.
\]
Clearly, $\nabla_{x} L(0)=0$ implies $\nabla_t \widehat{L}(0)=0$.
So, it holds that
\[
\frac{\pt F(0)}{\pt t_k} = \mu_{j_k} \, (k=1,\ldots, r),
\]
\[
\frac{\pt F(0)}{\pt t_k} = 0 \, (k=r+1,\ldots,\ell),
\]
\[
\frac{\pt F(0)}{\pt t_k} = \lmd_{k-\ell} \, (k=\ell+1,\ldots,n).
\]
Expand $F(t)$ locally around $0$ as
\[
F(t) = f_0 + f_1(t) + f_2(t) + f_3(t) + \cdots
\]
where each $f_i$ is a form in $t$ of degree $i$. Clearly, we have
\[
f_1(t)  = \mu_{j_1} t_1 + \cdots + \mu_{j_r} t_r
\quad \mbox{ on } \quad  t_{\ell+1}=\cdots=t_n=0.
\]
For $t_{r+1},\ldots, t_{\ell}$ near zero, it holds that
\[
F(0, \ldots, 0, t_{r+1},\ldots, t_{\ell}, 0, \ldots, 0) =
\widehat{L}(0, \ldots, 0, t_{r+1},\ldots, t_{\ell}, 0, \ldots, 0)=
\]
\[
L\big(\varphi^{-1}(0, \ldots, 0, t_{r+1},\ldots, t_{\ell}, 0, \ldots, 0)\big).
\]
Denote $x(t):=\varphi^{-1}(t) = (\varphi_1^{-1}(t),\ldots, \varphi_n^{-1}(t))$.
For all $i,j$, we have
\[
\frac{\pt^2 \widehat{L}(t)}{\pt t_i \pt t_j} =
\sum_{1\leq k,s\leq n} \frac{\pt^2 L( x(t) )}{\pt x_k \pt x_s}
\frac{\pt \varphi^{-1}_k(t)}{\pt t_i} \frac{\pt \varphi^{-1}_s(t)}{\pt t_j} +
\sum_{1\leq k \leq n} \frac{\pt L( x(t) )}{\pt x_k}
\frac{\pt^2 \varphi^{-1}_k(t)}{\pt t_i \pt t_j}.
\]
Evaluating the above at $x=t=0$, we get (note $\nabla_x L(0)=0$)
\[
\frac{\pt^2 \widehat{L}(0)}{\pt t_i \pt t_j} =
\sum_{1\leq k,s\leq n} \frac{\pt^2 L( 0 )}{\pt x_k \pt x_s}
\frac{\pt \varphi^{-1}_k(0)}{\pt t_i} \frac{\pt \varphi^{-1}_s(0)}{\pt t_j} .
\]
Note that $Jac(\varphi)|_0=Jac(\varphi^{-1})|_0=I_n$.
So, for all $r+1\leq i,j \leq \ell$, we have
\be \label{2nd-deriv=}
\left.\frac{\pt^2 f_2}{\pt t_i \pt t_j}\right|_{t=0} =
\left.\frac{\pt^2 F}{\pt t_i \pt t_j}\right|_{t=0} =
\left.\frac{\pt^2 \widehat{L}}{\pt t_i \pt t_j}\right|_{t=0} =
\left.\frac{\pt^2 L}{\pt x_i \pt x_j}\right|_{x=0}.
\ee

The strict complementarity condition \reff{optcd-scc}
implies that $\mu_{j_1}>0, \ldots, \mu_{j_r}>0$. So, the coefficients of
the linear form $\mu_{j_1} t_1 + \cdots + \mu_{j_r} t_r$ are all positive.
The second order sufficiency condition \reff{sosc-optcnd} implies that the sub-Hessian
\[
\left(\frac{\pt^2 L(0)}{\pt x_i \pt x_j}\right)_{r+1\leq i,j \leq \ell}
\]
is positive definite. By \reff{2nd-deriv=}, the quadratic form
$f_2$ is positive definite in $(t_{r+1},\ldots, t_{\ell})$.
Therefore, $f$ satisfies the boundary hessian condition at $0$.
\end{proof}

Now, we give the proof of Theorem~\ref{mthm:opc=>fcvg}.

\begin{proof}[Proof of Theorem~\ref{mthm:opc=>fcvg}]
By Theorem~\ref{thm:opcd=>BHC}, we know the boundary hessian condition
is satisfied at every global minimizer of $f$ on $K$,
when the constraint qualification, strict complementarity
and second order sufficiency conditions hold.
Then, by Theorem~\ref{thm:marBHC} of Marshall,
we know there exists $\sig_1 \in Q(g)$ such that
\[
f-f_{min} \equiv \sig_1 \quad  \mod \quad  I(V_{\re}(h)).
\]
Let $\hat{f}:=f-f_{min} -\sig_1$. Then $\hat{f}$ vanishes identically on $V_{\re}(h)$.
By Real Nullstellensatz (cf.~\cite[Corollary~4.1.8]{BCR}),
there exist $\ell \in \N$ and $\sig_2 \in \Sig\re[x]^2$ such that
\[
\hat{f}^{2\ell} + \sig_2 \in \langle h \rangle.
\]
Let $c>0$ be big enough such that
$s(t):=1+t+c t^{2\ell}$ is an SOS univariate polynomial in $t$
(cf.~\cite[Lemma~2.1]{Nie-PopVar}).
For each $\eps>0$, let
\[
\sig_\eps := \eps s\left( \hat{f}/\eps \right) + c \eps^{1-2\ell} \sig_2 + \sig_1.
\]
Then, one can verify that
\[
\phi_\eps := f - (f_{\min} -\eps) - \sig_\eps =
-c \eps^{1-2\ell} (\hat{f}^{2\ell} + \sig_2 ) \in \langle h \rangle.
\]
Clearly, there exists $k_0\in\N$ such that
$\sig_\eps \in Q_{k_0}(g)$ and $\phi_\eps \in \langle h \rangle_{2k_0}$
for all $\eps >0$. So, for every $\eps>0$, $\gamma = f_{min}-\eps$
is feasible in \reff{sos:Put} for the order $k_0$.
Hence, $ f_{k_0} \geq f_{min} -\eps$.
Since $\eps>0$ can be arbitrary, we get $f_{k_0} \geq f_{min}$.
Recall that $f_k \leq f_{min}$ for all $k$
and $\{f_k\}$ is monotonically increasing.
Hence, we get $f_k = f_{min}$ for all $k\geq k_0$, i.e.,
Lasserre's hierarchy has finite convergence.
\end{proof}

Theorem~\ref{thm:opcd=>BHC} shows that
the constraint qualification, strict complementarity
and second order sufficiency conditions
imply the boundary hessian condition.
Typically, to check the boundary hessian condition by its definition,
one needs to construct a local parametrization for the feasible set $K$
and verify some sign conditions,
which would be very inconvenient in applications.
However, checking optimality conditions is generally much more convenient,
because it does not need a parametrization and only requires
some elementary linear algebra operations.
This is an advantage of optimality conditions
over the boundary hessian condition.
We show this in the following example.

\begin{exm} \label{exm:opt-appl}
Consider the optimization problem:
\[
\left\{\baray{rl}
\min & x_1^6+x_2^6+x_3^6+3x_1^2x_2^2x_3^2-x_1^4(x_2^2+x_3^2)-x_2^4(x_3^2+x_1^2)-x_3^4(x_1^2+x_2^2) \\
s.t. &  x_1^2+x_2^2+x_3^2 = 1.
\earay\right.
\]
The objective is the Robinson form which is nonnegative but not SOS (cf. \cite{Rez00}).
The minimum $f_{min}=0$, and the global minimizers are
\[
\frac {1}{\sqrt{3}}(\pm 1, \pm 1, \pm 1),
\frac {1}{\sqrt{2}}(\pm 1, \pm 1, 0), \frac {1}{\sqrt{2}}(\pm 1, 0, \pm 1),
\frac {1}{\sqrt{2}}(0, \pm 1,  \pm 1).
\]
The unit sphere is smooth, so the constraint qualification
condition holds at every feasible point.
There is no inequality constraint, so strict complementarity is automatically satisfied.
It can be verified that the second order sufficiency condition \reff{sosc-optcnd}
holds on all the global minimizers. For instance, at $u=\frac {1}{\sqrt{3}}(1, 1, 1)$,
\[
\nabla_x^2L(u) = \frac 49
\left(
3\cdot \bbm 1 & 0 & 0 \\ 0 & 1 & 0 \\ 0 & 0 & 1 \ebm -
\bbm  1 \\ 1 \\ 1 \ebm  \bbm  1 \\ 1 \\ 1 \ebm^T\right), \quad
G(u)^\perp  = \bbm  1 \\ 1 \\ 1 \ebm^\perp.
\]
Clearly, \reff{sosc-optcnd} is satisfied at $u$.
By Theorem~\ref{mthm:opc=>fcvg},
Lasserre's hierarchy for this problem has finite convergence.
A numerical experiment by {\tt GloptiPoly~3} \cite{GloPol3} verified that
$f_5 = f_{min}=0$, modulo computer round-off errors.
\qed
\end{exm}

In Theorem~\ref{mthm:opc=>fcvg}, none of the optimality conditions there
can be dropped. We show counterexamples as follows.

\begin{exm}
(a) Consider the optimization problem:
\[
\left\{\baray{rl}
\min & 3x_1+2x_2\\
s.t. &  x_1^2-x_2^2-(x_1^2+x_2^2)^2\geq 0, \,  x_1\geq 0.
\earay\right.
\]
It can be shown that the origin $0$ is the unique global minimizer.
The constraint qualification condition fails at $0$,
and the first order optimality condition \reff{opcd:grad} fails.
The feasible set has nonempty interior,
so the SOS program \reff{sos:Put} achieves its optimal value (cf.~\cite{Las01}).
Lasserre's hierarchy for this problem does not have finite convergence,
which is implied by Proposition~\ref{ficvg=>FOOC} in the below. \\
(b) Consider the optimization problem:
\[
\left\{\baray{rl}
\min & x_1x_2+x_1^3+x_2^3\\
s.t. &  x_1\geq 0, x_2 \geq 0, 1-x_1-x_2 \geq 0.
\earay\right.
\]
Clearly, $0$ is the unique global minimizer.
The constraint qualification condition holds at $0$.
The Lagrange multipliers are all zeros.
The second order sufficiency condition \reff{sosc-optcnd}
also holds at $0$ because the null space $G(0)^\perp=\{0\}$.
However, the strict complementarity condition fails at $0$.
Lasserre's hierarchy for this problem, does not have finite convergence,
as shown by Scheiderer \cite[Remark~3.9]{Sch05JoA}. \\
(c) Consider the optimization problem:
\[
\left\{\baray{rl}
\min & x_1^4x_2^2+x_1^2x_2^4+x_3^6-3x_1^2x_2^2x_3^2+\eps (x_1^2+x_2^2+x_3^2)^3\\
s.t. & 1-x_1^2-x_2^2-x_3^2 \geq 0.
\earay\right.
\]
For every $\eps>0$, $0$ is the unique global minimizer,
and the constraint qualification and strict complementarity conditions hold at $0$.
However, the second order sufficiency condition fails at $0$.
For $\eps>0$ sufficiently small, Lasserre's hierarchy
for this optimization problem does not have finite convergence,
as shown by Marshall \cite[Example~2.4]{Mar06}.
\qed
\end{exm}

The first order optimality condition \reff{opcd:grad} is necessary
for Lasserre's hierarchy to have finite convergence.
This is summarized as follows.

\begin{prop} \label{ficvg=>FOOC}
Suppose \reff{sos:Put} achieves its optimal value.
If the first order optimality condition \reff{opcd:grad} fails
at a global minimizer of \reff{pop:gen},
then Lasserre's hierarchy cannot have finite convergence.
\end{prop}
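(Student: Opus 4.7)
The plan is to argue by contradiction. Suppose Lasserre's hierarchy has finite convergence, so $f_{k_0} = f_{min}$ for some order $k_0$. Since (\ref{sos:Put}) achieves its optimal value at this order, there exist polynomials $\phi_i \in \re[x]$ with $\deg(\phi_i h_i) \leq 2k_0$ and SOS polynomials $\sigma_j \in \Sigma\re[x]^2$ with $\deg(\sigma_j g_j) \leq 2k_0$ such that
\[
f - f_{min} = \sum_{i=1}^{m_1} \phi_i h_i + \sum_{j=0}^{m_2} \sigma_j g_j.
\]
Let $u$ be a global minimizer at which the first order optimality condition is assumed to fail. I would evaluate the identity above at $u$: since $h_i(u)=0$ and $f(u)=f_{min}$, the right hand side reduces to $\sum_{j=0}^{m_2} \sigma_j(u) g_j(u) = 0$. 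Each summand is nonnegative (with $g_0=1$), so $\sigma_j(u) g_j(u) = 0$ for every $j$.

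The key step is to extract Lagrange multipliers by differentiating this identity at $u$. For $j \notin J(u)$ (and for $j=0$) we have $g_j(u) > 0$, which forces $\sigma_j(u)=0$; since $\sigma_j$ is a sum of squares, $u$ is then a global minimizer of $\sigma_j$ on $\re^n$, hence $\nabla \sigma_j(u) = 0$. Taking $\nabla$ on both sides of the identity, using $h_i(u)=0$ and $g_j(u)=0$ for $j \in J(u)$, all terms involving $\nabla \phi_i(u)$ and $\nabla \sigma_j(u)$ drop out, leaving
\[
\nabla f(u) = \sum_{i=1}^{m_1} \phi_i(u)\, \nabla h_i(u) + \sum_{j \in J(u)} \sigma_j(u)\, \nabla g_j(u).
\]
Setting $\lambda_i := \phi_i(u)$ and $\mu_j := \sigma_j(u) \geq 0$ for $j \in J(u)$ (and $\mu_j:=0$ otherwise), we obtain the first order optimality condition (\ref{opcd:grad}) together with the complementarity (\ref{op-cd:comc}) at $u$, contradicting the hypothesis.

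The only delicate point is the vanishing of $\nabla \sigma_j(u)$ for inactive $j$ and for $j=0$. This is where the SOS structure of the representation is essential: nonnegativity on all of $\re^n$, not just on $K$, is what turns a zero of $\sigma_j$ into a critical point. The rest is bookkeeping with the product rule and the assumption that (\ref{sos:Put}) achieves its optimum, which is needed to guarantee an actual equality rather than a supremum that is not attained.
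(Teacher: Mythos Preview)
Your proof is correct and follows essentially the same approach as the paper's: assume finite convergence, use attainment in \eqref{sos:Put} to write $f-f_{min}=\sum\phi_ih_i+\sum\sigma_jg_j$, differentiate, and exploit the SOS structure to kill the unwanted $g_j(u)\nabla\sigma_j(u)$ terms. The only cosmetic difference is that the paper handles all $j$ uniformly via the observation ``$\sigma_j$ SOS and $g_j(u)\sigma_j(u)=0$ imply $g_j(u)\nabla\sigma_j(u)=0$,'' whereas you split into active and inactive indices; the underlying idea is identical.
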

\begin{proof}
Suppose otherwise $f_k = f_{min}$ for some $k$.
Since \reff{sos:Put} achieves its optimum,
\[
f - f_{min} =
\overset{m_1}{ \underset{ i=1}{\sum} }  \phi_i h_i
+ \overset{m_2}{ \underset{ j=0}{\sum} }   \sig_j g_j
\]
for some $\phi_i \in \re[x]$ and $\sig_j \in \Sig\re[x]^2$.
Let $u$ be a global minimizer of \reff{pop:gen}.
Note that every $h_i(u)=0$ and $g_j(u)\sig_j(u)=0$.
Differentiate the above with respect to $x$
and evaluate it at $u$, then we get
\[
\nabla f(u) =
\overset{m_1}{ \underset{ i=1}{\sum} }  \phi_i(u) \nabla h_i(u)
+ \overset{m_2}{ \underset{ j=0}{\sum} } (\sig_j(u) \nabla g_j(u) + g_j(u) \nabla \sig_j(u) ).
\]
Since every $\sig_j$ is SOS, $g_j(u)\sig_j(u)=0$ implies $g_j(u) \nabla \sig_j(u)=0$.
Hence,
\[
\nabla f(u) =
\overset{m_1}{ \underset{ i=1}{\sum} }  \phi_i(u) \nabla h_i(u)
+ \overset{m_2}{ \underset{ j=0}{\sum} } \sig_j(u) \nabla g_j(u).
\]
But this means that \reff{opcd:grad} holds at $u$,
which is a contradiction. So Lasserre's hierarchy
cannot have finite convergence.
\end{proof}

In Proposition~\ref{ficvg=>FOOC}, the assumption that \reff{sos:Put} achieves its optimal value
cannot be dropped. (This assumption is satisfied
if $K$ has nonempty interior, cf. \cite{Las01}.)
As a counterexample, consider the simple problem
\[
\min \quad x \qquad s.t. \quad -x^2 \geq 0.
\]
The global minimizer is $0$. The first order optimality condition fails at $0$,
but Lasserre's hierarchy has finite convergence
($f_k=f_{min}=0$ for all $k\geq 1$).

\section{Zariski Openness of Optimality Conditions}
\setcounter{equation}{0}

This section is mostly to prove Theroem~\ref{mthm:opcd=gen}.
For this purpose, we need some results on generic properties of critical points.

\subsection{Generic properties of critical points}

Given polynomials $p_0 \in \re[x]_{d_0}, \ldots, p_k \in \re[x]_{d_k}$ with $k\leq n$,
consider the optimization problem
\be \label{opt:p0pk}
\min_{x\in \re^n} \quad p_0(x) \quad s.t. \quad p_1(x) = \cdots = p_k(x) = 0.
\ee
Its Karush-Kuhn-Tucker (KKT) system is defined by the equations
\be  \label{eq:kkt-p01k}
\nabla_x p_0(x) - \sum_{i=1}^k \lmd_i \nabla_x p_i(x) = 0,
\quad p_1(x) = \cdots = p_k(x) = 0.
\ee
Every $(x,\lmd)$ satisfying \reff{eq:kkt-p01k} is called a {\it critical pair},
and such $x$ is called a {\it critical point}. Let
\be \label{ktRk:p=0}
\mc{K}(p) :=
\left\{ x \in \cpx^n
\left| \baray{c}
\rank \bbm \nabla_x p_0(x)& \nabla_x p_1(x) & \cdots & \nabla_x p_k(x) \ebm \leq k \\
\quad p_1(x) = \cdots = p_k(x) =0
\earay \right. \right\}
\ee
be the {\it KKT variety} of \reff{opt:p0pk}.
Clearly, every critical point belongs to $\mc{K}(p)$.

First, we discuss when does $\mc{K}(p)$ intersect the variety $q(x)= 0$
of a polynomial $q \in \re[x]_{d_{k+1}}$, i.e., when does the polynomial system
\be \label{kt-p:cap:q=0}
\left\{ \baray{c}
\rank \bbm \nabla_{x} p_0(x)& \nabla_{x} p_1(x) & \cdots & \nabla_{x} p_k(x) \ebm \leq k  \\
\quad p_1(x) = \cdots = p_k(x) =0, \quad q(x) =0
\earay \right.
\ee
have a solution in $\cpx^n$?
For a generic $p$, $\mc{K}(p)$ is a finite set (cf. \cite[Prop.~2.1]{NR09}),
and it does not intersect $q(x)=0$ if $q$ is also generic.
Consider the homogenization in $x:=(x_1,\ldots,x_n)$
of the polynomial system \reff{kt-p:cap:q=0}:
\be \label{hmg:ktp:q=0}
\left\{   \baray{c}
\rank \bbm \nabla_x \widetilde{p_0}(\tilde{x})  &
\nabla_x \widetilde{p_1}(\tilde{x}) & \cdots &
\nabla_x \widetilde{p_k}(\tilde{x}) \ebm \leq k,  \\
\quad \widetilde{p_1}(\tilde{x})  = \cdots = \widetilde{p_k}(\tilde{x})
= \widetilde{q}(\tilde{x}) = 0.
\earay \right.
\ee
Its variable is $\tilde{x}:=(x_0,\ldots,x_n)$.
When $k<n$, the matrix in \reff{hmg:ktp:q=0} has rank $\leq k$
if and only if all its maximal minors vanish;
when $k=n$, the rank condition in \reff{hmg:ktp:q=0} is always satisfied
and can be dropped.
Thus, in either case, \reff{hmg:ktp:q=0} can be equivalently defined by
some homogeneous polynomial equations, say,
\[
M_1(\tilde{x}) = \cdots = M_\ell(\tilde{x}) =0.
\]
Note that the coefficients of every $M_i$ are also homogeneous in
the ones of each of $p_0,\ldots, p_k, q$. By Theorem~\ref{thm:elim},
there exist polynomials
\[
R_1(p_0,\ldots,p_k;q), \ldots, R_t(p_0,\ldots,p_k;q)
\]
in the coefficients of $p_0,\ldots,p_k,q$ such that
\bit
\item every $R_i(p_0,\ldots,p_k;q)$ has integer coefficients
and is homogeneous in the coefficients of each of $p_0,p_1,\ldots, p_k,q$;

\item the system \reff{hmg:ktp:q=0}
has a solution $0 \ne \tilde{x} \in \cpx^{n+1}$ if and only if
\[
R_1(p_0,\ldots,p_k;q) = \cdots = R_t(p_0,\ldots,p_k;q) = 0.
\]
\eit
Define the polynomial $\mathscr{R}(p_0,\ldots,p_k;q)$ as
\be \label{ktRes:R(pq)}
\mathscr{R}(p_0,\ldots,p_k;q) :=
R_1(p_0,\ldots,p_k;q)^2+\cdots+R_t(p_0,\ldots,p_k;q)^2.
\ee
Note that $\mathscr{R}(p_0,\ldots,p_k;q)$ is a polynomial
in the coefficients of the tuple
\[
(p_0, \ldots, p_k, q) \in
\re[x]_{d_0} \times \cdots \times \re[x]_{d_k} \times \re[x]_{d_{k+1}}.
\]
Combining the above, we can get the following proposition.

\begin{prop} \label{pro:kkt-res}
Let $p_0 \in \re[x]_{d_0}, \ldots, p_k \in \re[x]_{d_k},q\in \re[x]_{d_{k+1}}$,
and $\mathscr{R}$ be as defined in \reff{ktRes:R(pq)}.
Then \reff{hmg:ktp:q=0} has a solution $0 \ne \tilde{x} \in \cpx^{n+1}$
if and only if $\mathscr{R}(p_0,\ldots,p_k;q) = 0$.
In particular, if $\mathscr{R}(p_0,\ldots,p_k;q) \ne 0$,
then \reff{kt-p:cap:q=0} has no solution in $\cpx^n$.
\end{prop}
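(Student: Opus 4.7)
The plan is to deduce both assertions directly from the construction of $\mathscr{R}$, with essentially no new computation. Everything has been set up so that Theorem~\ref{thm:elim} carries the load; what remains is to convert the conjunction of finitely many elimination conditions into a single polynomial condition, and to relate the homogenized system \reff{hmg:ktp:q=0} back to \reff{kt-p:cap:q=0} via an explicit point construction.

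For the ``if and only if'' I would argue as follows. Since Theorem~\ref{thm:elim} produces the $R_i$ with integer (hence real) coefficients, each $R_i(p_0,\ldots,p_k;q)$ is a real number when evaluated at tuples of real polynomials. Therefore
\[
\mathscr{R}(p_0,\ldots,p_k;q) \,=\, \sum_{i=1}^t R_i(p_0,\ldots,p_k;q)^2
\]
vanishes if and only if every $R_i(p_0,\ldots,p_k;q)$ vanishes, which by the defining property of $R_1,\ldots,R_t$ is in turn equivalent to the homogeneous system $M_1(\tilde{x})=\cdots=M_\ell(\tilde{x})=0$ having a nonzero solution $\tilde{x}\in\cpx^{n+1}$, i.e., to \reff{hmg:ktp:q=0} having a solution $0\ne\tilde{x}\in\cpx^{n+1}$.

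For the ``in particular'' clause I would establish the contrapositive by exhibiting a dehomogenization map from solutions of \reff{kt-p:cap:q=0} to nonzero solutions of \reff{hmg:ktp:q=0}. Given $x^*\in\cpx^n$ satisfying \reff{kt-p:cap:q=0}, take $\tilde{x}^*:=(1,x^*)\ne 0$. Because $\widetilde{p_i}(x_0,x)=x_0^{d_i}p_i(x/x_0)$, direct differentiation gives $\widetilde{p_i}(1,x^*)=p_i(x^*)$ and $\nabla_x\widetilde{p_i}(1,x^*)=\nabla_x p_i(x^*)$ for each $i$, and similarly for $q$; hence $\tilde{x}^*$ satisfies \reff{hmg:ktp:q=0}. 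Combining with the first part, if $\mathscr{R}(p_0,\ldots,p_k;q)\ne 0$ then \reff{hmg:ktp:q=0} has no nonzero solution, and therefore \reff{kt-p:cap:q=0} has no solution in $\cpx^n$ at all.

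The main (and essentially only) point requiring care is confirming that when \reff{hmg:ktp:q=0} is expanded into the maximal-minor equations $M_j$ together with the $\widetilde{p_i}$ and $\widetilde{q}$, the resulting polynomials have coefficients that are themselves homogeneous in the coefficients of each of $p_0,\ldots,p_k,q$ separately, so that Theorem~\ref{thm:elim} applies verbatim. Beyond this bookkeeping, there is no substantive obstacle: the proposition is really a restatement of elimination theory, repackaged through the sum-of-squares trick so that a single polynomial $\mathscr{R}$ encodes the joint vanishing of all $R_i$.
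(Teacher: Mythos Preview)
Your proposal is correct and matches the paper's approach exactly: the paper offers no separate proof but simply writes ``Combining the above, we can get the following proposition,'' and your argument spells out precisely those two implicit steps (the sum-of-squares trick to collapse the $R_i$ into a single $\mathscr{R}$, and the dehomogenization $x^*\mapsto(1,x^*)$ for the ``in particular'' clause).
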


We would like to remark that the polynomial $\mathscr{R}$ in \reff{ktRes:R(pq)}
does not vanish identically in
$
(p_0, \ldots, p_k, q) \in
\re[x]_{d_0} \times \cdots \times \re[x]_{d_k} \times \re[x]_{d_{k+1}},
$
for any given positive degrees $d_0, \ldots, d_k, d_{k+1}$.
A proof for this fact is given in the Appendix.

\bigskip

Second, we discuss when the KKT system \reff{eq:kkt-p01k} is nonsingular. Denote
\[
L_p(x,\lmd) := p_0(x) - \sum_{i=1}^k \lmd_i p_i(x).
\]
The polynomial system \reff{eq:kkt-p01k} is nonsingular
if and only if the square matrix
\[
H_{p}(x,\lmd) := \bbm \nabla_{x}^2 L_p(x,\lmd) &  Jac(p_1, \ldots, p_k)|_x^T   \\
Jac(p_1, \ldots, p_k)|_x  &  0 \ebm
\]
is nonsingular at every critical pair $(x,\lmd)$.
If every $p_i$ is generic, there are only finitely many critical pairs,
and \reff{eq:kkt-p01k} is nonsingular if $\det H_{p}(x,\lmd)$
does not vanish on them.

The matrix $H_{p}(x,\lmd)$ is singular if and only if there exists
$ (0,0) \ne (y, \nu) \in \re^n \times \re^k$ such that
\be \label{Hnull:(y,nu)}
\nabla_{x}^2 L_p(x,\lmd)  y + Jac(p_1, \ldots, p_k)|_{x}^T \nu = 0,
\quad y \in \bigcap_{i=1}^k \nabla p_i(x)^\perp.
\ee
When $Jac(p_1, \ldots, p_k)|_{x}$ has full rank $k$,
the existence of a pair $(y,\nu) \ne (0,0)$ satisfying \reff{Hnull:(y,nu)}
is equivalent to the existence of a pair $(y,\nu)$
with $y\ne 0$ satisfying \reff{Hnull:(y,nu)}.
When \reff{eq:kkt-p01k} is nonsingular,
there is no $y \ne 0$ satisfying \reff{Hnull:(y,nu)}
for any critical pair $(x,\lmd)$.
Write $\nu = (\nu_1,\ldots,\nu_k)$, then
\reff{eq:kkt-p01k} and \reff{Hnull:(y,nu)} together are equivalent to
\be  \label{gradHes=0:p}
\left\{\baray{c}
\bbm \nabla_x p_0 \\  (\nabla_x^2 p_0) y \ebm -
\overset{k}{\underset{i=1}{\sum}} \lmd_i \bbm \nabla_x p_i \\  (\nabla_x^2 p_i) y  \ebm +
\overset{k}{\underset{i=1}{\sum}} \nu_i \bbm  0 \\  \nabla_x p_i \ebm   = 0, \\
p_1(x) = \cdots = p_k(x) = (\nabla_x p_1)^T y = \cdots = (\nabla_x p_k)^T y =0.
\earay\right.
\ee
Define the $(2n)\times (2k+1)$ matrix
\[
P(x,y) : =
\bbm  \nabla_x p_0 \quad \cdots \quad \nabla_x p_k  &   \qquad \qquad \\
(\nabla_{x}^2 p_0)y  \quad \cdots \quad (\nabla_{x}^2 p_k)y &
\nabla_x p_1 \quad \cdots \quad \nabla_x p_k  \ebm.
\]
Clearly, every pair $(x,y)$ in \reff{gradHes=0:p} satisfies
\be  \label{rP<=2k:p^Ty=0}
\left\{\baray{c}
\rank \, P(x,y) \leq 2k,
\quad p_1(x) = \cdots = p_k(x) =  0, \\
(\nabla_x p_1)^T y = \cdots = (\nabla_x p_k)^T y =0.
\earay \right.
\ee
If the vectors
\[
\bbm \nabla_x p_i \\  (\nabla_x^2 p_i) y  \ebm,
\bbm  0 \\  \nabla_x p_i \ebm \quad(i=1,\ldots,k)
\]
are linearly independent,
\reff{gradHes=0:p} and \reff{rP<=2k:p^Ty=0} are equivalent.
Consider the homogenization in $x$ of \reff{rP<=2k:p^Ty=0}:
\be \label{ktHs-hg:xy-p}
\left\{\baray{c}
\rank \, \widetilde{P}(\tilde{x},y) \leq 2k, \quad
\widetilde{p_1}(\tilde{x}) = \cdots = \widetilde{p_k}(\tilde{x}) = 0, \\
(\nabla_x \widetilde{p_1})^Ty = \cdots = (\nabla_x \widetilde{p_k})^Ty = 0.
\earay\right.
\ee
In the above, $\tilde{x}:=(x_0,\ldots,x_n)$ and
\[
\widetilde{P}(\tilde{x},y) :=
\bbm
\nabla_x \widetilde{p_0}  \quad \cdots
\quad \nabla_x \widetilde{p_k}  &  \qquad \qquad \\
\bmx \big(\nabla_x^2 \widetilde{p_0}\big) y  & \cdots &
\big(\nabla_x^2 \widetilde{p_k}\big) y  \emx  &
\nabla_x \widetilde{p_1} \quad \cdots \quad \nabla_x \widetilde{p_k}
\ebm.
\]
When $k=n$, we always have $\rank\,\widetilde{P}(\tilde{x},y) \leq 2k$
and the rank condition in \reff{ktHs-hg:xy-p} can be dropped.
When $k<n$, we can replace $\rank\,\widetilde{P}(\tilde{x},y) \leq 2k$
by the vanishing of all maximal minors of $\widetilde{P}(\tilde{x},y)$.
In either case, \reff{ktHs-hg:xy-p} could be equivalently defined by
some polynomial equations, say,
\[
N_1(\tilde{x}, y) = \cdots = N_r(\tilde{x}, y)=0.
\]
Note that all $N_1,\ldots, N_r$ are homogeneous in both $\tilde{x}$ and $y$,
and their coefficients are also homogeneous in the ones of each of $p_0,p_1,\ldots,p_k$.
By applying Theorem~\ref{thm:elim} twice (first in $\tilde{x}$ and then in $y$),
there exist polynomials $D_i(p_0,p_1,\ldots, p_k)$ $(i = 1,\ldots, s)$,
in the coefficients of $p_0,p_1,\ldots,p_k$, such that
\bit
\item every $D_i(p_0,p_1,\ldots, p_k)$ has integer coefficients
and is homogeneous in the coefficients of
each of $p_0,p_1,\ldots, p_k$;

\item there exist $0 \ne \tilde{x} \in \cpx^{n+1}$ and $0 \ne y \in \cpx^n$
satisfying \reff{ktHs-hg:xy-p} if and only if
\[
D_1(p_0,p_1,\ldots, p_k) = \cdots = D_s(p_0,p_1,\ldots, p_k) = 0.
\]
\eit
Define the polynomial $\mathscr{D}(p_0,p_1,\ldots, p_k)$ as
\be \label{ktDis:D(p)}
\mathscr{D}(p_0,p_1,\ldots, p_k) := D_1(p_0,p_1,\ldots, p_k)^2+
\cdots+D_s(p_0,p_1,\ldots, p_k)^2.
\ee
Note that $\mathscr{D}(p_0,\ldots,p_k)$ is a polynomial
in the coefficients of the tuple
\[(
p_0, \ldots, p_k) \in
\re[x]_{d_0} \times \cdots \times \re[x]_{d_k}.
\]
Combining the above, we can get the following proposition.

\begin{prop} \label{po:kktDis}
Let $p_0 \in \re[x]_{d_0}, \ldots, p_k \in \re[x]_{d_k}$
and $\mathscr{D}$ be as defined in \reff{ktDis:D(p)}.
Then \reff{ktHs-hg:xy-p} has a solution
$(\tilde{x}, y) \in \cpx^{n+1} \times \cpx^n$ with $\tilde{x}\ne 0, y\ne 0$
if and only if $\mathscr{D}(p_0,\ldots,p_k) = 0$.
In particular, if $\mathscr{D}(p_0,\ldots,p_k) \ne 0$,
then \reff{eq:kkt-p01k} is a nonsingular system.
\end{prop}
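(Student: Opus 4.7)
The plan is to argue in two stages, in parallel with the treatment of Proposition~\ref{pro:kkt-res}. The first stage establishes the equivalence between $\mathscr{D}(p_0,\ldots,p_k)=0$ and existence of a nontrivial solution to the homogenized system \reff{ktHs-hg:xy-p}; the second stage deduces the ``in particular'' clause by a case analysis on the kernel of $H_p$. For the first stage I apply the construction that precedes \reff{ktDis:D(p)}: Theorem~\ref{thm:elim} is used twice, first to eliminate $\tilde{x}$ from $N_1(\tilde{x},y)=\cdots=N_r(\tilde{x},y)=0$, producing polynomials in $y$ whose coefficients are polynomials in the coefficients of $p_0,\ldots,p_k$; then to eliminate $y$ from that system, yielding polynomials $D_1,\ldots,D_s$ in the coefficients of $p_0,\ldots,p_k$ alone. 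By the elimination theorem, $D_1=\cdots=D_s=0$ iff there exist $0\ne \tilde{x}\in\cpx^{n+1}$ and $0\ne y\in\cpx^n$ solving \reff{ktHs-hg:xy-p}. Because the $p_i$ have real coefficients, $\mathscr{D}(p_0,\ldots,p_k)$ is a sum of squares of real numbers and therefore vanishes iff every $D_i$ vanishes, giving the equivalence.

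For the second stage I prove the contrapositive: if \reff{eq:kkt-p01k} is singular, then \reff{ktHs-hg:xy-p} admits a solution with $\tilde{x}\ne 0$ and $y\ne 0$. Suppose $H_p(x^*,\lmd^*)$ is singular at some critical pair $(x^*,\lmd^*)$, and fix $(y,\nu)\ne (0,0)$ in its kernel, so that \reff{Hnull:(y,nu)} holds. If $y\ne 0$, then $(x^*,\lmd^*,y,\nu)$ satisfies \reff{gradHes=0:p} (with coefficient $1$ on the $\nabla_x p_0$ block), which forces the rank drop \reff{rP<=2k:p^Ty=0} at $(x^*,y)$; homogenizing by $x_0=1$ produces $((1,x^*),y)$ as a nontrivial solution to \reff{ktHs-hg:xy-p}. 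If $y=0$, then $\nu\ne 0$ and $Jac(p_1,\ldots,p_k)|_{x^*}^T\nu=0$, so $\rank Jac(p_1,\ldots,p_k)|_{x^*}<k$; its kernel in $\cpx^n$ has dimension at least $n-k+1\geq 1$, and I pick any nonzero $y'$ in this kernel. Then $(\nabla_x p_i(x^*))^T y'=0$ for each $i$, while the last $k$ columns $\bbm 0 \\ \nabla_x p_i(x^*) \ebm$ of $P(x^*,y')$ are linearly dependent, forcing $\rank P(x^*,y')\leq 2k$ automatically. Thus $(x^*,y')$ solves \reff{rP<=2k:p^Ty=0}, and homogenizing once more yields a nontrivial solution to \reff{ktHs-hg:xy-p}.

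The main obstacle is the subcase $y=0$ in the second stage: here the singularity of $H_p$ reflects a singularity of the constraint variety $V(p_1,\ldots,p_k)$ at $x^*$ rather than a Hessian obstruction, so the kernel vector of $H_p$ itself cannot be plugged into \reff{rP<=2k:p^Ty=0}. One must instead exhibit a fresh $y'$ inside the kernel of $Jac(p_1,\ldots,p_k)|_{x^*}$ and verify the rank drop on $P(x^*,y')$ by hand; this succeeds precisely because the last $k$ columns of $P$ already carry the gradients whose linear dependence is the defining feature of this subcase. Everything else is routine bookkeeping within the equivalences already set up in the paragraphs preceding \reff{ktHs-hg:xy-p}.
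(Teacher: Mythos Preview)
Your proof is correct and follows the same route the paper takes: the proposition is stated as a direct consequence of the construction immediately preceding it (the two-fold application of Theorem~\ref{thm:elim} to the bihomogeneous system $N_1=\cdots=N_r=0$, together with the chain \reff{Hnull:(y,nu)}$\Rightarrow$\reff{gradHes=0:p}$\Rightarrow$\reff{rP<=2k:p^Ty=0}$\Rightarrow$\reff{ktHs-hg:xy-p} via homogenization $x_0=1$). Your explicit treatment of the subcase $y=0$ is in fact more complete than the paper's discussion, which only argues the ``in particular'' implication under the hypothesis that $Jac(p_1,\ldots,p_k)|_x$ has full rank; your observation that rank-deficiency of the Jacobian already forces a linear dependence among the last $k$ columns of $P(x^*,y')$ closes that gap cleanly.
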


The following special cases are useful to
illustrate Proposition~\ref{po:kktDis}.
\bit

\item  (Every $\deg(p_i)=1$.)
Let $p_i=a_i^Tx+b_i$ for $i=0,\ldots,k$.
If $k<n$ and $a_0, a_1,\ldots,a_k$ are linearly independent,
then $\widetilde{P}$ is a constant matrix of rank $2k+1$.
If $k=n$ and $a_1,\ldots,a_n$ are linearly independent,
then there is no $y\ne 0$ satisfying $\nabla_x \widetilde{p_i}^Ty=0$ for $i=1,\ldots,n$.
So, if every $p_i$ is generic, then \reff{ktHs-hg:xy-p}
has no complex solution $(\tilde{x}, y)$ with $\tilde{x}\ne 0, y\ne 0$.

\item  ($k=0$, i.e., \reff{opt:p0pk} has no constraints.)
The system \reff{ktHs-hg:xy-p} is then reduced to
\be \label{grad:Hes=0}
\nabla_x  \widetilde{p_0}(\tilde{x}) = 0, \quad
\left( \nabla_x^2  \widetilde{p_0}(\tilde{x}) \right) y =0.
\ee
If $\deg(p_0)=1$ and $p_0$ is nonzero, $\nabla_x  \widetilde{p_0}(\tilde{x}) = 0$
has no complex solution. If $\deg(p_0)=2$ and
$p_0=x^TAx+2b^Tx+c$ with $\det(A) \ne 0$, there is no $y\ne 0$ satisfying
$\left( \nabla_x^2  \widetilde{p_0}(\tilde{x}) \right) y =0$.
When $\deg(p_0)\geq 3$, by the definition of discriminants
for several polynomials (cf.~\S2.4),
\reff{grad:Hes=0} has a complex solution $(\tilde{x},y)$
with $\tilde{x}\ne 0,y \ne 0$ if and only if
\[
\Delta\left(\frac{\pt \widetilde{p_0}}{\pt x_1}, \ldots,
\frac{\pt \widetilde{p_0}}{\pt x_n}\right) = 0.
\]
So, if $p_0$ is generic, there are no
$\tilde{x}\ne 0,y \ne 0$ satisfying \reff{grad:Hes=0}.

\eit
The above observations can be simply implied by Proposition~\ref{po:kktDis}.

In Proposition~\ref{po:kktDis}, one might naturally think of
replacing $\mathscr{D}$  by
\be \label{dis-kkt}
\Delta( \nabla_x p_0 - Jac(p_1, \ldots, p_k)|_x^T \lmd, p_1, \ldots, p_k),
\ee
which is the discriminant for the set of polynomials defining \reff{eq:kkt-p01k},
by considering $\lmd_1,\ldots,\lmd_k$ as new variables, in addition to $x$.
However, this approach is problematic.
The main issue is that the discriminantal polynomial in \reff{dis-kkt}
might be identically zero, e.g., when $\deg(p_0) \leq \max_{1\leq i\leq k} \deg(p_i)$.
For convenience, consider the simple case $n>k=1$ and $a:=\deg(p_1) - \deg(p_0)\geq 0$.
By definition of discriminants for several polynomials (cf. \S2.4),
the discriminant in \reff{dis-kkt} vanishes if there exists
a complex vector $(x_0, x_1, \ldots, x_n, \lmd_1) \ne 0$ satisfying
\be \label{eq:dis=0}
\left\{\baray{c}
x_0^{a+1} \cdot \nabla_x \widetilde{p_0}  -
\lmd_1 \nabla_x \widetilde{p_1} =0 ,
\quad \widetilde{p_1}(x_0,\ldots,x_n)=0, \\
\det \bbm  x_0^{a+1} \cdot \nabla_{x}^2 \widetilde{p_0} -
\lmd_1 \nabla_x^2 \widetilde{p_1} &
\nabla_{x} \widetilde{p_1}   \\
 \nabla_{x} \widetilde{p_1}^T  &  0 \ebm
=0.
\earay \right.
\ee
Let $(u_1,\ldots,u_n) \ne 0$ be a complex zero of $\widetilde{p_1}(0,x_1,\ldots,x_n)$.
Then, $(0,u_1,\ldots,u_n, 0)$ is a nonzero solution of \reff{eq:dis=0}.
So, for any $p_0, p_1$, \reff{eq:dis=0} always has a nonzero
complex solution like $(0,u_1,\ldots,u_n, 0)$.
This means that the discriminant in \reff{dis-kkt} identically vanishes.
On the other hand, the polynomial $\mathscr{D}$ in \reff{ktDis:D(p)}
does not vanish identically in
$
(p_0, \ldots, p_k) \in
\re[x]_{d_0} \times \cdots \times \re[x]_{d_k},
$
for any given positive degrees $d_0, \ldots, d_k$.
A proof for this fact is given in the Appendix.

Typically, the polynomials
$\mathscr{R}$ in \reff{ktRes:R(pq)} and $\mathscr{D}$ in \reff{ktDis:D(p)}
are very difficult to compute explicitly.
They are mostly for theoretical interests.

\subsection{Zariski openness of optimality conditions}

This section is to prove that the constraint qualification,
strict complementarity and second order sufficiency conditions
all hold at every local minimizer of \reff{pop:gen}
if a finite set of polynomials, which are in the coefficients of polynomials
$f, h_i\,(i\in [m_1]), g_j \,(j\in [m_2])$,
do not not vanish at the input polynomials.
(That is, they hold in a Zariski open set in the space 
of input polynomials.)
These polynomials are listed as follows.

\begin{cond}  \label{cond:gen}
The polynomials $f_0 \in \re[x]_{d_0}$,
$h_i \in \re[x]_{d_i}$ ($i\in[m_1]$),
and $g_j\in \re[x]_{d_j^{\prm}}$ ($j\in[m_2]$)
with $m_1\leq n$ satisfy ($Res, \Delta$ are from \S2.4,
$\mathscr{R}$ from \reff{ktRes:R(pq)} and $\mathscr{D}$ from \reff{ktDis:D(p)}):

\bit

\item [(a)] If $m_1+m_2\geq n+1$, for all $1\leq j_1 < \cdots < j_{n-m_1+1} \leq m_2$,
\[
Res(h_1,\ldots,h_{m_1}, g_{j_1},\ldots,g_{j_{n-m_1+1}}) \ne 0.
\]

\item [(b)] For all $1\leq j_1<\cdots < j_r \leq m_2$ with $0\leq r \leq n-m_1$,
\[
\Delta(h_1,\ldots,h_{m_1}, g_{j_1},\ldots,g_{j_r}) \ne 0.
\]

\item [(c)] For all $1\leq j_1<\cdots < j_r \leq m_2$ with $0\leq r \leq n-m_1$,
\[
\mathscr{R}(f,p_1,\ldots, p_k; p_{k+1}) \ne 0,
\]
where $(p_1,\ldots, p_k, p_{k+1})$ is a re-ordering of
$(h_1,\ldots,h_{m_1}, g_{j_1},\ldots,g_{j_r})$.

\item [(d)] For all $1\leq j_1<\cdots < j_r \leq m_2$ with $0\leq r \leq n-m_1$,
\[
\mathscr{D}(f,h_1,\ldots,h_{m_1}, g_{j_1},\ldots,g_{j_r}) \ne 0.
\]

\eit

\end{cond}

First, we study the relationship between Condition~\ref{cond:gen}
and properties of critical points.
Let $u \in K$ be a critical point of \reff{pop:gen} (i.e.,
\reff{opcd:grad} and \reff{op-cd:comc} are satisfied for some $\lmd_i, \mu_j$,
excluding the sign conditions $\mu_j \geq 0$).
Let $J(u):=\{j_1,\ldots,j_r\}$ be the index set of active inequality constraints. Denote
\[
L(x) := f(x) - \sum_{i=1}^{m_1} \lmd_i h_i(x) - \sum_{ j \in J(u) } \mu_j g_j(x),
\]
\[
G(x) := \bbm \nabla h_1(x) & \cdots & \nabla h_{m_1}(x) &
\nabla g_{j_1}(x) & \cdots & \nabla g_{j_r}(x) \ebm^T,
\]
\[
H(x):= \bbm  \nabla_{x}^2 L(x)  & G(x)^T \\ G(x) & 0 \ebm.
\]

\begin{prop} \label{inq=>kktcd}
Let $u \in K$ and $\lmd_i, \mu_j$ satisfy
\reff{opcd:grad}-\reff{op-cd:comc} (excluding the sign conditions $\mu_j \geq 0$),
and $L(x), G(x), H(x)$ be as above.
Condition~\ref{cond:gen} has the following properties:
\bit

\item [i)] Item (a) implies that at most $n-m_1$ of $g_j$'s are active at every point of $K$.

\item [ii)] Item (b) implies that the constraint qualification condition
holds at every point of $K$.

\item [iii)] Item (c) implies that $\lmd_i \ne 0, \mu_j \ne 0$
for all $i\in [m_1]$ and $j \in J(u)$.

\item [iv)] Item (d) implies that $H(u)$ is nonsingular, i.e., $\det H(u) \ne 0$.
\eit
\end{prop}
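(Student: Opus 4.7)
My plan is to prove the four items separately; each reduces, via the definitions in Section~4.1, to an application of Proposition~\ref{pro:kkt-res} or Proposition~\ref{po:kktDis} together with the standard meaning of resultants and discriminants. Items (i) and (ii) concern the combinatorial structure of active constraints and use only conditions (a) and (b); items (iii) and (iv) concern multiplier values and the bordered Hessian and draw on (c) and (d).

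For item (i) I would argue by contradiction: if some $u \in K$ had $n-m_1+1$ active inequality constraints $g_{j_1},\ldots,g_{j_{n-m_1+1}}$, then together with $h_1,\ldots,h_{m_1}$ these give $n+1$ polynomials vanishing at the affine point $u\in\re^n$. Homogenizing produces a nonzero projective common zero $(1,u)\in\cpx^{n+1}$ of the homogenizations, forcing the resultant in (a) to vanish. Item (ii) follows similarly by contradiction: failure of CQC at a feasible $u$ means the Jacobian of the active polynomials $h_1,\ldots,h_{m_1},g_{j_1},\ldots,g_{j_r}$ drops rank at the common zero $u$, and by the very definition of the multi-polynomial discriminant (cf.~\reff{dfeqn:mul-dis}) this forces the corresponding $\Delta$ in (b) to be zero.

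The heart of item (iii) is to reinterpret a vanishing multiplier as a nontrivial solution of the system \reff{kt-p:cap:q=0} underlying Proposition~\ref{pro:kkt-res}. Suppose some $\lambda_i$ or $\mu_j$ is zero; I would designate the corresponding active constraint as the polynomial $q$ and relabel the remaining active constraints as $p_1,\ldots,p_k$ with $k=m_1+r-1$. The first order condition \reff{opcd:grad} with a zero coefficient in front of $q$ then reads $\nabla f(u)\in\mbox{span}\{\nabla p_1(u),\ldots,\nabla p_k(u)\}$, so the rank condition in \reff{kt-p:cap:q=0} is satisfied and $u$ is also a common zero of $p_1,\ldots,p_k$ and $q$. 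Taking $(1,u)\in\cpx^{n+1}$ yields a nonzero solution of \reff{hmg:ktp:q=0}, so Proposition~\ref{pro:kkt-res} forces $\mathscr{R}(f,p_1,\ldots,p_k;p_{k+1})=0$, contradicting (c). For item (iv) I would consider the auxiliary equality-constrained problem obtained by freezing all active constraints as equalities; its KKT system is exactly \reff{eq:kkt-p01k} with $(p_0,p_1,\ldots,p_k):=(f,h_1,\ldots,h_{m_1},g_{j_1},\ldots,g_{j_r})$, and the square matrix $H_p(x,\lambda,\mu)$ defined in Section~4.1 coincides at $(u,\lambda,\mu)$ with the bordered Hessian $H(u)$ in the statement. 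Non-vanishing of $\mathscr{D}$ in (d), combined with Proposition~\ref{po:kktDis}, then yields $\det H(u)\ne 0$.

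The anticipated difficulty is administrative rather than conceptual: for each item one must keep careful track of the index sets and orderings so that, for every possible active set $J(u)$ and every relabeling used in (iii), the quantifiers in Condition~\ref{cond:gen} supply exactly the nonvanishing that is being invoked. The homogenization step $(1,u)\in\cpx^{n+1}$ is harmless since $u$ is an affine point, and once the indexing is pinned down the conclusions of Propositions~\ref{pro:kkt-res} and \ref{po:kktDis} apply directly.
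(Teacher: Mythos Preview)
Your proposal is correct and follows essentially the same approach as the paper's proof: each of the four items is dispatched by contradiction via the defining property of the relevant resultant, discriminant, $\mathscr{R}$, or $\mathscr{D}$, invoking Propositions~\ref{pro:kkt-res} and \ref{po:kktDis} exactly where you indicate. Your treatment of item (iii) is slightly more explicit than the paper's (which handles only the case $\mu_{j_r}=0$ with a ``say''), but the idea is identical.
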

\begin{proof}

i) If more than $n-m_1$ of $g_j$'s vanish at a point $u \in K$,
say, $g_{j_1},\ldots,g_{j_{n-m_1+1}}$,
then there are $n+1$ polynomials vanishing at $u$, including $h_1,\ldots,h_{m_1}$.
This implies the resultant
\[
Res(h_1,\ldots,h_{m_1}, g_{j_1},\ldots,g_{j_{n-m_1+1}}) =0,
\]
which violates item (a) of Condition~\ref{cond:gen}.
So, the item i) is true.

ii)
By item (b) of Condition~\ref{cond:gen},
$\Delta(h_1,\ldots, h_{m_1}, g_{j_1},\ldots, g_{j_r}) \ne 0$.
By the definition of $\Delta$ (cf. \S2.4),
the gradients of $h_1,\ldots, h_{m_1}, g_{j_1},\ldots, g_{j_r}$ at $u$
are linearly independent, i.e.,
the constraint qualification condition holds at $u$.

iii) Suppose otherwise one of $\lmd_i (i \in [m_1])$
or $\mu_j (j \in J(u))$ is zero, say, $\mu_{j_r} = 0$,
then $u$ is also a critical point of the optimization problem
\[
\min \quad f(x) \quad s.t. \quad h_i(x) = 0\, (i\in [m_1]), \,\,
 g_j(x) = 0 \,(j \in J(u)/\{j_r\}).
\]
Note that $g_{j_r}(u)=0$. By definition of $\mathscr{R}$ in \reff{ktRes:R(pq)}
and Proposition~\ref{pro:kkt-res}, we get
\[
\mathscr{R}(f,h_1,\ldots,h_{m_1}, g_{j_1},\ldots,g_{j_{r-1}}; g_{j_r})=0,
\]
which contradicts item (c) of Condition~\ref{cond:gen}.
So, the item iii) must be true.

iv) This is implied by definition of $\mathscr{D}$ in \reff{ktDis:D(p)}
and Proposition~\ref{po:kktDis}.
\end{proof}

Second, we study the relationship between Condition~\ref{cond:gen}
and the optimality conditions. This is summarized as follows.

\begin{prop} \label{opcd=ZarOpen}
If Condition~\ref{cond:gen} holds, then the constraint qualification,
strict complementarity and second order sufficiency conditions
all hold at every local minimizer of \reff{pop:gen}.
This is implied by the following properties:

\bit

\item [1)] Item (a) of Condition~\ref{cond:gen} implies that
at most $n-m_1$ of $g_j$'s are active
at every local minimizer of \reff{pop:gen}.

\item [2)] Item (b) of Condition~\ref{cond:gen} implies that
the constraint qualification condition holds
at every local minimizer of \reff{pop:gen}.

\item [3)] Items (b) and (c) of Condition~\ref{cond:gen} imply that
the strict complementarity condition
holds at every local minimizer of \reff{pop:gen}.

\item [4)] Items (b) and (d) of Condition~\ref{cond:gen} imply that
the second order sufficiency condition
holds at every local minimizer of \reff{pop:gen}.

\eit
\end{prop}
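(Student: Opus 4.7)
The plan is to deduce the four implications (1)--(4) by specializing Proposition~\ref{inq=>kktcd}, whose conclusions are already stated for KKT points of \reff{pop:gen}; the point is that every local minimizer, once a constraint qualification is in force, is a KKT point, and therefore inherits each of the four properties in Proposition~\ref{inq=>kktcd}.

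First, items 1) and 2) are essentially immediate. Part (i) of Proposition~\ref{inq=>kktcd} applies to all points of $K$, so in particular to any local minimizer, giving 1). Similarly, part (ii) gives the constraint qualification condition (CQC) at every feasible point, which covers 2) once we note that whatever subset of inequality constraints is active at a local minimizer is one of the index subsets to which item (b) of Condition~\ref{cond:gen} applies.

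For item 3), I would argue as follows. Let $u$ be a local minimizer. By item 2) already established, CQC holds at $u$, so there exist Lagrange multipliers $\lambda_i,\mu_j$ satisfying \reff{opcd:grad}--\reff{op-cd:comc}; and since $u$ is a local minimizer, these multipliers automatically satisfy $\mu_j\geq 0$. Applying Proposition~\ref{inq=>kktcd}(iii), which uses item (c), gives $\mu_j\neq 0$ for every $j\in J(u)$, hence $\mu_j>0$, which is precisely the strict complementarity condition.

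For item 4), the main work is to pass from nonsingularity of the bordered Hessian $H(u)$ to the second order sufficiency condition (SOSC). Assuming CQC from item 2), $u$ is a KKT point and the second order necessary condition (SONC) \reff{opt:sonc} holds: $v^T \nabla_x^2 L(u) v\geq 0$ for all $v\in G(u)^\perp$. Suppose SOSC fails; then some $0\neq y\in G(u)^\perp$ achieves $y^T\nabla_x^2 L(u)y=0$, and because the quadratic form is nonnegative on $G(u)^\perp$, this $y$ is a minimizer of it on that subspace. Consequently $\nabla_x^2 L(u)\,y$ must be orthogonal to $G(u)^\perp$, i.e., lie in the row space of $G(u)$, so there exists $\nu$ with $\nabla_x^2 L(u)\,y+G(u)^T\nu=0$ while $G(u)\,y=0$. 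Together $(y,\nu)\neq 0$ lies in the kernel of $H(u)$, contradicting part (iv) of Proposition~\ref{inq=>kktcd} (which relies on item (d)). The delicate point, and the only real obstacle, is the passage from SONC plus $y^T\nabla_x^2L(u)y=0$ to the Lagrange-type identity exhibiting a nonzero kernel vector of $H(u)$; everything else is bookkeeping around Proposition~\ref{inq=>kktcd}.
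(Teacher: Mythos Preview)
Your proposal is correct and follows essentially the same approach as the paper: items 1)--3) are deduced directly from Proposition~\ref{inq=>kktcd} exactly as you do, and for item 4) the paper also argues by contradiction using SONC to find a minimizer of the quadratic form on $G(u)^\perp$, then applies the first-order condition for that minimization to produce a nonzero kernel vector of $H(u)$. The only organizational difference is that the paper packages the equivalence ``$\det H(u)\ne 0 \Leftrightarrow$ SOSC'' (under full rank of $G(u)$) as a separate lemma, whereas you inline just the direction needed.
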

\begin{proof}
Let $u$ be a local minimizer of \reff{pop:gen}.

1) and 2) are implied by i), ii) of Proposition~\ref{inq=>kktcd} respectively.

3) By item 2), the constraint qualification condition holds at $u$.
So, there exist $\lmd_i, \mu_j$ satisfying \reff{opcd:grad}-\reff{op-cd:comc}
with all $\mu_j \geq 0$.
If $j \not\in J(u)$, then $g_j(u)>0$ and $\mu_j + g_j(u)>0$;
if $j \in J(u)$, then $\mu_j \ne 0$ by item iii) of Proposition~\ref{inq=>kktcd},
and hence $\mu_j>0$ and $\mu_j + g_j(u)>0$.
This means that the strict complementarity condition holds at $u$.

4) By item 2), the constraint qualification condition holds at $u$.
So, \reff{opcd:grad} and \reff{op-cd:comc} are satisfied.
The second order sufficiency condition is then implied by item iv) of Proposition~\ref{inq=>kktcd}
and Lemma~\ref{lm:SOSC=nsig} in the below.
\end{proof}

\begin{lemma} \label{lm:SOSC=nsig}
Let $u$ be a local minimizer of \reff{pop:gen}, $\lmd_i, \mu_j$ satisfy
\reff{opcd:grad}-\reff{op-cd:comc},
and $L(x), G(x), H(x)$ be as defined preceding Proposition~\ref{inq=>kktcd}.
If $G(u)$ has full rank, then \reff{sosc-optcnd}
holds at $u$ if and only if $\det H(u) \ne 0$.
\end{lemma}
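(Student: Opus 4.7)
The plan is to reduce the determinantal condition $\det H(u)\ne 0$ to nonsingularity of the reduced Hessian $Z^T\nabla_x^2 L(u)Z$, where the columns of an $n\times(n-k)$ matrix $Z$ form a basis of $G(u)^\perp$ and $k:=m_1+r$; then to use the assumption that $u$ is a local minimizer to upgrade nonsingularity to positive definiteness, which is precisely \reff{sosc-optcnd}.

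Write $M:=\nabla_x^2 L(u)$. Because $G(u)$ has full row rank $k$, one can pick an $n\times k$ matrix $Y$ (for instance $Y=G(u)^T$) so that $G(u)Y$ is invertible, and $[Z\ Y]$ is then a nonsingular $n\times n$ matrix with $G(u)Z=0$. Conjugating $H(u)$ by the block matrix $\mbox{diag}([Z\ Y],\,I_k)$ produces the symmetric matrix
\[
\bbm Z^TMZ & Z^TMY & 0 \\ Y^TMZ & Y^TMY & (G(u)Y)^T \\ 0 & G(u)Y & 0 \ebm,
\]
and a further block congruence (which is legitimate since $G(u)Y$ is invertible) eliminates the off-diagonal entries to yield a block-diagonal matrix of the form $\mbox{diag}(Z^TMZ,\,S)$, where $S$ is a nonsingular $2k\times 2k$ saddle-point block of signature $(k,k,0)$. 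This is the classical bordered-Hessian inertia reduction. Consequently
\[
\det H(u)\ne 0 \ \Longleftrightarrow\ Z^TMZ \ \mbox{is nonsingular}.
\]

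The second step invokes optimality. Since $G(u)$ has full row rank the constraint qualification condition is satisfied at $u$, so by the second order necessity condition \reff{opt:sonc} we have $v^TMv\ge 0$ for every $v\in G(u)^\perp$, equivalently $Z^TMZ\succeq 0$. A positive semidefinite matrix is nonsingular if and only if it is positive definite, so combining the two steps gives
\[
\det H(u)\ne 0 \ \Longleftrightarrow\ Z^TMZ\succ 0 \ \Longleftrightarrow\ \reff{sosc-optcnd}\mbox{ holds at }u,
\]
the last equivalence being the tautological identification of the reduced Hessian with the restriction of $M$ to $G(u)^\perp$.

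The only nontrivial step is the bordered-matrix congruence above; it is a standard computation, and I would either write the explicit congruence in two lines or quote it as the bordered Hessian inertia formula. Everything else is immediate from the second-order theory already summarized in the introduction.
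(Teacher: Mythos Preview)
Your argument is correct. Both proofs hinge on the second order necessity condition \reff{opt:sonc}, which is available because $G(u)$ has full rank and $u$ is a local minimizer; this is the only place where local optimality of $u$ is used.

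The routes differ. The paper treats the two implications separately: for SOSC $\Rightarrow$ nonsingularity it perturbs $\nabla_x^2 L(u)$ by $\eta\,G(u)^TG(u)$ to make it positive definite and then uses a Schur-complement factorization of $H(u)$; for nonsingularity $\Rightarrow$ SOSC it argues by contradiction, noting that a vector $0\ne v\in G(u)^\perp$ with $v^T\nabla_x^2L(u)v\le 0$ would (by SONC) minimize the constrained quadratic $z^T\nabla_x^2L(u)z$ subject to $G(u)z=0$, whose first-order condition produces a null vector of $H(u)$. Your approach instead establishes the single equivalence $\det H(u)\ne 0 \Leftrightarrow Z^T\nabla_x^2L(u)Z$ nonsingular via one bordered-Hessian congruence, and then lets SONC upgrade ``nonsingular'' to ``positive definite'' in one stroke. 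Your route is more symmetric and slightly more conceptual (it is essentially the inertia law for saddle-point matrices), while the paper's backward direction is perhaps more self-contained in that it avoids quoting the bordered-Hessian reduction. Either way the computation you defer is short: with $S=\bigl[\begin{smallmatrix} Y^TMY & (GY)^T\\ GY & 0\end{smallmatrix}\bigr]$ one checks $[Z^TMY\ \ 0]\,S^{-1}\,[Z^TMY\ \ 0]^T=0$, so the Schur complement of $S$ in the conjugated matrix is exactly $Z^TMZ$.
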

\begin{proof}
First, assume \reff{sosc-optcnd} holds.
Then, for $\eta > 0$ big enough,
\[
V := \nabla_{x}^2 L(u) + \eta G(u)^T G(u) \succ  0.
\]
By the matrix equation
\[
\bbm  I_{n}  & \half \eta G(u)^T \\ 0  &  I_{m_1+r} \ebm
\bbm  \nabla_{x}^2 L(u)  & G(u)^T \\ G(u) & 0 \ebm
\bbm  I_{n}  & 0 \\ \half \eta G(u)  &  I_{m_1+r} \ebm  =
\]
\[
\bbm  \nabla_{x}^2 L(u) + \eta G(u)^TG(u)  & G(u)^T \\ G(u) & 0 \ebm ,
\]
one can see that
\[
\det H(u) = \det (V) \cdot \det \left( - G(u) V^{-1} G(u)^T \right) \ne 0,
\]
because of the positive definiteness of $V$ and nonsingularity of $G(u)$.

Second, assume $\det H(u) \ne 0$.
Suppose otherwise \reff{sosc-optcnd} fails.
Then there exists $ 0 \ne v\in G(u)^\perp$ such that
$
v^T \nabla_{x}^2 L(0) v \leq 0.
$
Since $G(u)$ has full rank, the constraint qualification condition holds at $u$.
So, the second order necessity condition \reff{opt:sonc} is satisfied at $u$.
It implies that $v$ is a minimizer of the problem
\[
\min_{z\in \re^n} \quad z^T \Big( \nabla_{x} ^2 L(u) \Big)z \quad s.t. \quad G(u) z =0.
\]
By the first order optimality condition for the above,
there exists $\nu$ such that $\nabla_{xx}^2 L(u) v = G(u)^T \nu$,
which then implies
\[
\bbm  \nabla_{xx}^2 L(u)  & G(u)^T \\ G(u) & 0 \ebm  \bbm v \\ -\nu \ebm =0.
\]
This contradicts $\det H(u) \ne 0$, because $v \ne 0$.
So, \reff{sosc-optcnd} must hold at $u$.
\end{proof}

We conclude this section with the proof of Theorem~\ref{mthm:opcd=gen}.

\begin{proof}[Proof of Theorem~\ref{mthm:opcd=gen}]
Let $\varphi_1,\ldots,\varphi_L$ be the finite set of polynomials
given in Condition~\ref{cond:gen}. Theorem~\ref{mthm:opcd=gen}
is then implied by Proposition~\ref{opcd=ZarOpen}.
\end{proof}

\section{Some discussions}
\setcounter{equation}{0}

Our main conclusions are Theorems~\ref{mthm:opc=>fcvg} and \ref{mthm:opcd=gen}.
Lasserre's hierarchy has finite convergence when the constraint qualification,
strict complementarity and second order sufficiency conditions hold
at every global minimizer, under the archimedean condition.
These optimality conditions are all satisfied at every local minimizer
if the vector of coefficients of input polynomials
lies in a Zariski open set. This gives a connection between the classical
nonlinear programming theory and Lasserre's hierarchy of semidefinite relaxations
in polynomial optimization. These results give an interpretation
for the phenomenon that Lasserre's hierarchy often has finite convergence
in solving polynomial optimization problems.

Under the assumptions that Condition~\ref{cod:par-u}
holds at every $u\in K$ and $K$ is irreducible and bounded,
Marshall \cite[Corollary~4.5]{Mar09} proved that, for each $d\geq 2$, the set
\[
\big\{ f \in \re[x]_d: \, f \mbox{ satisfies BHC at each global minimizer on } K \big\}
\]
is open and dense in $\re[x]_d$. This interesting result can also be implied by
Theorems \ref{mthm:opcd=gen} and \ref{thm:opcd=>BHC}.
Indeed, they can imply the following stronger conclusions:
\bit
\item
the boundary hessian condition is satisfied in a Zariski open set
in the space of input data
(not every open dense set is Zariski open, e.g., $\re^n\backslash\Z^n$);

\item Condition~\ref{cod:par-u} also holds in a Zariski open set;

\item for the case $d=1$,
the boundary hessian condition also  holds in a Zariski open set;

\item
the defining polynomials for $K$ are also allowed to be generic;
the set $K$ is not required to be irreducible or bounded.
\eit

We would like to remark that Putinar's Positivstellensatz
(cf.~Theorem~\ref{Put-Pos}) also holds generically for polynomials
that are nonnegative on $K$. Assume $\langle h \rangle + Q(g)$ is archimedean
and the ideal $\langle h \rangle$ is real.
Let $P_d(K)$ be the cone of polynomials in $\re[x]_d$
that are nonnegative on $K$, and $\pt P_d(K)$ be the boundary of $P_d(K)$.
Theorems~\ref{mthm:opcd=gen}, \ref{thm:marBHC} and \ref{thm:opcd=>BHC}
imply that if $p$ lies generically on $\pt P_d(K)$
then $p \in \langle h \rangle + Q(g)$.
In \cite{Las01}, Lasserre interpreted Putinar's Positivstellensatz
as a generalized KKT condition for global optimality.
Therefore, the classical KKT conditions for local optimality
and the generalized KKT condition
(i.e., Putinar's Positivstellensatz, under archimedeanness)
for global optimality, both hold generically.

A theoretically interesting question is whether there is
a uniform bound on the number of steps to achieve finite convergence
for Lasserre's hierarchy in the generic case. That is,
whether there exists an integer $N$, which only depends on
the degree of $f$ and a set of defining polynomials for $K$, such that
$f_k = f_{min}$ for all generic $f$ of a given degree and $k\geq N$?
Unfortunately, such a bound $N$ typically does not exist.
This could be implied by a result of Scheiderer \cite{Sch05} on
the non-existence of degree bounds for weighted SOS representations.
For instance, when $K$ is the $3$-dimensional unit ball,
such a bound does not exist (cf. \cite[Section~5]{Nie-ft}).

The archimedean condition cannot be
removed in Theorems~\ref{mthm:opc=>fcvg}, \ref{thm:marBHC}.
For instance, consider the unconstrained optimization
\[
\min \quad x_1^2x_2^2(x_1^2+x_2^2-3x_3^2)+x_3^6 + x_1^2+x_2^2+x_3^2.
\]
The origin $0$ is the unique minimizer.
The archimedean condition failed, because the feasible set
is the entire space $\re^n$ and is not compact.
The objective $f$ is the sum of the Motzkin polynomial and
the positive definite quadratic form $x^Tx$.
The second order sufficiency condition hold at $0$.
However, for all scalar $\gamma$, $f-\gamma$ is not SOS.
In this case, Lasserre's hierarchy does not converge.

The archimedean condition is not generically satisfied.
To see this fact, consider the simple case that $m_1=0$ and $m_2=1$.
For a given $d$, let $\mathscr{A}(d)$
be the set of polynomials $g \in \re[x]_d$ such that $Q(g)$ is archimedean.
The set $\mathscr{A}(d)$ is not dense in $\re[x]_d$.
For instance, when $d=2$, both $\mathscr{A}(d)$
and its complement $\re[x]_d\backslash \mathscr{A}(d)$
have nonempty interior:
\bit

\item  Let $b_1 = x^Tx-1$. Clearly, $b_1 \not\in \re[x]_2 \backslash \mathscr{A}(2)$.
For all $q \in \re[x]_2$ with sufficiently small coefficients,
we have $b_1+q \not\in \mathscr{A}(2)$.

\item  Let $b_2 = 1-x^Tx$. Clearly, $b_2 \in \mathscr{A}(2)$.
For all $q \in \re[x]_2$ with small sufficiently coefficients,
we have $b_2+q \in \mathscr{A}(2)$.

\eit

There exist polynomial optimization problems
that Lasserre's hierarchy fails to have finite convergence,
e.g., minimizing the Motzkin polynomial over the unit ball.
Such problems always exist when the feasible set has dimension
three or higher, as shown by Scheiderer \cite{Sch09}.
So, we are also interested in methods that have finite convergence
for optimizing {\it all} polynomials over a given set $K$.
The Jacobian SDP relaxation
is a method that has this property (cf. \cite{Nie-jac}).

Theorems~\ref{mthm:opc=>fcvg} does not tell how to check
when finite convergence happens. This can be done
by using {\it flat truncation},
which is a rank condition on the dual optimizers of \reff{sos:Put}.
Flat truncation is a sufficient condition
for Lasserre's hierarchy to have finite convergence.
In the generic case, flat truncation is also a necessary condition
for Lasserre's hierarchy to have finite convergence (cf.~\cite{Nie-ft}).

%
%

No matter Lasserre's hierarchy has finite convergence or not,
if there are finitely many global minimizers and the archimedean condition holds,
then the flat truncation condition is always asymptotically satisfied
(cf.~\cite[\S3]{Nie-ft}). So, in numerical experiments,
we might also observe that Lasserre's hierarchy has finite convergence
even if it does not have in exact mathematical computations.
However, if there are infinitely many global minimizers,
the flat truncation condition is typically not satisfied
(cf.~Laurent \cite[\S6.6]{Lau}). For instance, consider the problem
\[
\left\{
\baray{rl}
\min & x_1^2x_2^2(x_1^2+x_2^2)+(x_3-1)^6-3x_1^2x_2^2(x_3-1)^2 \\
s.t. & x_1^2+x_2^2+x_3^2- \half \geq 0, 2-x_1^2-x_2^2-x_3^2 \geq 0.
\earay \right.
\]
The objective $f$ is shifted from the Motzkin polynomial
(i.e., $f(x_1,x_2,x_3+1)$ is the Motzkin polynomial).
It has infinitely many global minimizers and $f_{min}=0$.
Lasserre's hierarchy for this problem does not have finite convergence.
This can be implied by the proof of Prop. 6.1 of Scheiderer \cite{Sch99},
because $(0,0,1)$ is a zero $f$ lying in the interior of the feasible set
and $f$ is a nonnegative but non-SOS form in $(x_1,x_2,x_3-1)$.
The flat truncation condition is typically not satisfied
for dual optimizers of \reff{sos:Put}.
When {\tt GloptiPoly~3} is applied to solve this problem numerically,
the convergence did not occur for the orders $k=3,4,\ldots,12$.

\bigskip
\noindent
{\bf Acknowledgement} \,
The author was partially supported by NSF grants DMS-0757212 and DMS-0844775.
He would like very much to thank Murray Marshall
for communications on the boundary hessian condition.

\appendix
\section{Non-identically Vanishing of $\mathscr{R}$ and $\mathscr{D}$}
\setcounter{equation}{0}

Given any positive degrees $d_0, d_1, \ldots, d_k, d_{k+1}$, we show that
the polynomial $\mathscr{R}(p_0,\ldots,p_k;q)$ defined in \reff{ktRes:R(pq)}
and the polynomial $\mathscr{D}(p_0,p_1,\ldots,p_k)$
defined in \reff{ktDis:D(p)} do not vanish identically
in $p_i \in \re[x]_{d_i}\,(i=0,\ldots,k)$ and $q \in \re[x]_{d_{k+1}}$.
Without loss of generality, we can assume all $d_1, \ldots, d_k>1$
because linear constraints in \reff{eq:kkt-p01k} can be removed by eliminating variables.

\bigskip

First, we prove that the polynomial $\mathscr{R}$ defined in \reff{ktRes:R(pq)}
does not vanish identically in the space
$\re[x]_{d_0} \times \cdots \times \re[x]_{d_k} \times \re[x]_{d_{k+1}}$.
We only consider the case $k<n$, because if $k=n$
then $\widetilde{p_1}(\tilde{x})=\cdots=\widetilde{p_k}(\tilde{x})
=\widetilde{q}(\tilde{x})$ has no nonzero complex solution in the generic case.
By Proposition~\ref{pro:kkt-res},
it is enough to show that the homogeneous polynomial system \reff{hmg:ktp:q=0}
does not have a complex solution $\tilde{x} \ne 0$ for generic $p_0, p_1,\ldots, p_k, q$.
We prove this in two cases:
\bit

\item ($\mathbf{x_0 \ne 0}$)\,
We can scale as $x_0=1$, and
the system  \reff{hmg:ktp:q=0} is then reduced to \reff{kt-p:cap:q=0}.
When $p_0, p_1,\ldots, p_k$ are generic,
the set $\mc{K}(p)$ defined in \reff{ktRk:p=0} is finite (cf. \cite[Prop.~2.1]{NR09}).
Thus, when $q$ is also generic, \reff{kt-p:cap:q=0}
does not have a solution in $\cpx^n$.

\item ($\mathbf{x_0 = 0}$)\,  The system  \reff{hmg:ktp:q=0} is then reduced to
\be \label{ktp-q:x0=0}
\left\{   \baray{c}
\rank \bbm \nabla_x p_0^h(x)  & \nabla_x p_1^h (x) & \cdots &
\nabla_x p_k^h(x) \ebm \leq k,  \\
\quad p_1^h(x)  = \cdots = p_k^h(x) = q^h(x) = 0.
\earay \right.
\ee
(Here, $f^h$ denotes the homogeneous part of the highest degree
for a polynomial $f$.)
When $p_1,\ldots, p_k$ are generic, we have $\Delta(p_1^h,\ldots, p_k^h) \ne 0$.
By definition of $\Delta$ (cf. \S2.4),
if $p_1^h(x)  = \cdots = p_k^h(x) =0$ and $x \ne 0$, then
\[
\rank \bbm  \nabla_x p_1^h (x) & \cdots & \nabla_x p_k^h(x) \ebm = k.
\]
So, if $x$ satisfies \reff{ktp-q:x0=0},
there must exist scalars $c_1,\ldots, c_k$ such that
\[
\nabla_x p_0^h(x)  = c_1 \nabla_x p_1^h (x) + \cdots + c_k \nabla_x p_k^h(x).
\]
Since each $p_i^h$ is a form, by Euler's formula
for homogeneous polynomials (cf. \cite[\S2]{Nie-dis}),
we can get
\[
d_0 p_0^h(x) = x^T \nabla_x p_0^h(x)
=\sum_{i=1}^k c_i x^T \nabla_x p_k^h(x)
=\sum_{i=1}^k c_i d_k p_k^h(x) = 0.
\]
This means that \reff{ktp-q:x0=0} implies
\[
p_0^h(x) = p_1^h(x)  = \cdots = p_k^h(x) =0,
\]
\[
\rank \bbm \nabla_x p_0^h (x) &  \nabla_x p_1^h (x) &
\cdots & \nabla_x p_k^h(x) \ebm = k.
\]
Any $x$ satisfying the above must be zero if
$\Delta(p_0^h, p_1^h,\ldots,p_k^h) \ne 0$.

\eit
Combining the above two cases, we know the polynomial system \reff{hmg:ktp:q=0}
has no complex solution $\tilde{x} \ne 0$
when $p_0, p_1,\ldots, p_k,q$ are generic.

\bigskip

Second, we show that the polynomial $\mathscr{D}(p_0,p_1,\ldots,p_k)$
defined in \reff{ktDis:D(p)} does not identically vanish in the space
$\re[x]_{d_0} \times \re[x]_{d_1} \times \cdots \times \re[x]_{d_k}$.
By Proposition~\ref{po:kktDis}, it is enough to prove that
there exist $p_i \in \re[x]_{d_i} \,(i=0,\ldots,k)$ such that
\reff{ktHs-hg:xy-p} has no complex solution $(\tilde{x},y)$
with $\tilde{x}\ne 0, y\ne 0$. We prove this in two cases.

\bit

\item ($\mathbf{x_0 \ne 0}$)\,  We scale as $x_0 = 1$,
and \reff{ktHs-hg:xy-p} is then reduced to \reff{rP<=2k:p^Ty=0}.
Choose polynomials $\hat{p}_i$ as follows:
\[
\hat{p}_0:= f_0 \in \re[x_{k+1},\ldots,x_n]_{d_0}, \,
\hat{p}_1:=x_1^{d_1}-1, \ldots, \hat{p}_k:=x_k^{d_k}-1.
\]
Clearly, on the variety $V(\hat{p}_1,\ldots,\hat{p}_k)$,
the gradients $\nabla_x \hat{p}_1,\ldots, \nabla_x \hat{p}_k$
are linearly independent, and so are
\[
\bbm \nabla_x \hat{p}_i \\  (\nabla_x^2 \hat{p}_i) y  \ebm,
\bbm  0 \\  \nabla_x \hat{p}_i \ebm \quad(i=1,\ldots,k).
\]
Thus, \reff{rP<=2k:p^Ty=0} is equivalent to \reff{gradHes=0:p}.
If $(x,\lmd)$ is a critical pair, then $\lmd_1 = \cdots = \lmd_k =0$ and
$D:=\diag(d_1x_1^{d_1-1}, \ldots,d_kx_k^{d_k-1})$ is invertible.
Denote $x_{I}:=(x_1,\ldots,x_k,)$ and $x_{II}:=(x_{k+1},\ldots,x_n)$.
Note that ($\hat{p}:=(\hat{p}_0,\ldots,\hat{p}_k)$)
\[
H_{\hat{p}}(x,0) =
\left[\baray{c|c|c}
0 & 0 & D\\ \hline
0 & \nabla_{x_{II} }^2 f_0 &  0 \\ \hline
D  &  0  &   0
\earay \right].
\]
(In the above, the $0$'s denote zero matrices of proper dimensions.)
The matrix $H_{\hat{p}}(x,0)$ is nonsingular if and only if
$ \nabla_{x_{II} }^2 f_0 $ is nonsingular.
Therefore, \reff{gradHes=0:p} has a solution
if and only if there exists $u \in \cpx^{n-k}$ satisfying
\[
\nabla_{x_{II}} f_0(u) =0, \quad  \det \,
\nabla_{x_{II}}^2 f_0(u)  = 0.
\]
However, the above is possible only if
\[
\Delta(\frac{\pt f_0}{\pt x_{k+1}}, \ldots, \frac{\pt f_0}{\pt x_{n}}) = 0.
\]
So, if $f_0$ is generic, then $H_{\hat{p}}(x,0)$ is nonsingular
for all $(x,\lmd)$ satisfying \reff{eq:kkt-p01k} corresponding to
$\hat{p}_0, \hat{p}_1, \ldots, \hat{p}_k$.

By continuity of roots of polynomials, $H_{p}(x,\lmd)$ is nonsingular
for every pair $(x,\lmd)$ satisfying \reff{eq:kkt-p01k},
if each $p_i$ is generic and close enough to $\hat{p_i}$.

\item ($\mathbf{x_0 = 0}$)\,  The polynomial system \reff{ktHs-hg:xy-p}
is then reduced to
\be \label{Hes-sig:hmg}
\left\{\baray{c}
\rank \, Q(x,y) \leq 2k, \quad
p_1^h(x) = \cdots = p_k^h(x) = 0, \\
\big(\nabla_{x} p_1^h(x)\big)^Ty = \cdots = \big(\nabla_{x} p_1^h(x)\big)^Ty = 0.
\earay\right.
\ee
%
%
In the above, $Q(x,y)$ denotes the matrix
\[
\bbm
\nabla_{x} p_0^h  \quad \cdots
\quad \nabla_{x}  p_k^h  &  \qquad \qquad \\
\bmx \big(\nabla_{x}^2 p_0^h\big) y  & \cdots &
\big(\nabla_{x}^2 p_k^h\big) y  \emx  &
\nabla_{x} p_1^h \quad \cdots \quad \nabla_{x} p_k^h
\ebm.
\]
We show that if $p_0,p_1,\ldots,p_k$ are generic, then
\reff{Hes-sig:hmg} has no complex solution $(x,y)$
with $x\ne 0, y\ne 0$.
When all $p_i$ are generic, for every $x\ne 0$ satisfying
\[
p_1^h(x) = \cdots = p_k^h(x) = 0,
\]
the gradients
$\nabla_{x} p_1^h, \ldots, \nabla_{x}  p_k^h $ are linearly independent.
When $\rank \, Q(x,y) \leq 2k$,
there exist scalars $c_1, \ldots, c_k$ such that
\be  \label{eq:kkt-hmg}
\nabla_x p_0^h(x) - \sum_{i=1}^k c_i \nabla_x p_i^h(x) = 0.
\ee
By Euler's formula for homogeneous polynomials, the above implies
\[
d_0 p_0^h(x) = x^T \nabla_x p_0^h(x)  = \sum_{i=1}^k \lmd_i x^T \nabla_x p_i^h(x)
= \sum_{i=1}^k \lmd_i d_i p_i^h(x) = 0.
\]
This means that if some $x\ne 0$ satisfies \reff{Hes-sig:hmg}
then the polynomial system
\[
p_0^h(x) = p_1^h(x) = \cdots = p_k^h(x) = 0
\]
is singular.
But this is impossible unless $\Delta(p_0^h, p_1^h,\ldots, p_k^h) = 0$.

\eit
Combining the above two cases, we know that there exist polynomials
$p_i \in \re[x]_{d_i}\,(i=0,\ldots,k)$ such that
there are no complex $\tilde{x} \ne 0, y\ne 0$ satisfying
\reff{ktHs-hg:xy-p}.
This shows that $\mathscr{D}(p_0,\ldots,p_k)$
does not identically vanish.

\end{document}